\newtheorem{theorem}{Theorem}
\newtheorem{proposition}[theorem]{Proposition}
\newtheorem{corollary}[theorem]{Corollary}
\newtheorem{lemma}[theorem]{Lemma}
\theoremstyle{remark}
\newtheorem{example}[theorem]{Example}
\newtheorem{remark}[theorem]{Remark}
\def\CaF{\mathbf{F}}
\def\CaP{\mathbf{P}}
\def\FraC{\mathcal{C}}
\def\FraF{\mathcal{F}}
\def\FraI{\mathcal{I}}
\def\FraS{\mathcal{S}}
\def\FraP{\mathcal{P}}
\def\N{\mathbb{N}}
\def\R{\mathbb{R}}
\def\Z{\mathbb{Z}}
\def\Q{\mathbb{Q}}
\def\d{\mathrm{d}}
\def\int{\mathrm{int} }
\title{Affine convex body semigroups}
\author{J. I. Garc\'{\i}a-Garc\'{\i}a\footnote{Departamento de Matem\'aticas, Universidad de C\'adiz,
E-11510 Puerto Real (C\'{a}diz, Spain). E-mail: ignacio.garcia@uca.es. Partially supported by MTM2007-62346 and Junta de Andaluc\'{\i}a group FQM-366. }\\
M.A. Moreno-Fr\'{\i}as\footnote{Departamento de Matem\'aticas, Universidad de C\'adiz,
E-11510 Puerto Real (C\'{a}diz, Spain). E-mail: mariangeles.moreno@uca.es. Partially supported by MTM2008-06201-C02-02 and Junta de Andaluc\'{\i}a group FQM-298.}\\
A. S\'{a}nchez-R.-Navarro \footnote{Departamento Lenguajes y Sistemas Inform\'{a}ticos, Universidad de C\'adiz,
E-11405 Jerez de la Frontera (C\'{a}diz, Spain). E-mail: alfredo.sanchez@uca.es. Partially supported by Junta de Andaluc\'{\i}a group FQM-366.}\\
A. Vigneron-Tenorio\footnote{Departamento de Matem\'aticas, Universidad de C\'adiz,
E-11405 Jerez de la Frontera (C\'{a}diz, Spain). E-mail: alberto.vigneron@uca.es. Partially supported by MTM2007-64704 and Junta de Andaluc\'{\i}a group FQM-366.}\\
}
\date{}
\begin{document}

\maketitle

\begin{abstract}
In this paper we present a new kind of semigroups called convex body semigroups which are generated by convex bodies of $\R^k$.
They generalize to arbitrary dimension the concept of proportionally modular numerical semigroup of \cite{Rosales_modular}.
Several properties of these semigroups are proven. Affine convex body semigroups obtained from circles and polygons of $\R^2$ are characterized. The algorithms for computing minimal system of generators of these semigroups are given. We provide  the implementation of some of them.

\smallskip
{\small \emph{Keywords:} Affine semigroup, circle semigroup, convex body monoid, convex body semigroup, polygonal semigroup.}

\smallskip
{\small \emph{MSC-class:} 20M14 (Primary),  20M05 (Secondary).}
\end{abstract}

\section*{Introduction}
Let $F$ be a subset of $\R^k$, $\CaF=\bigcup_{i=0}^{\infty} F_i\cap \R^k_{\geq}$ and   $\FraF=\bigcup_{i=0}^{\infty} F_i\cap \N^k,$
where $F_i=\{iX| X\in F\}$ with $i\in \N$. A convex body of $\R^n$ is a  compact convex subset with non-empty interior.
If $F$ is a convex body, then the set $\CaF$ is a monoid and $\FraF$ is a semigroup  (see Proposition \ref{pr1}). Given a convex body $F$, we call convex body monoid (respectively semigroup) generated by $F$ to the above monoid (respectively semigroup) $\CaF$ (respectively $\FraF$). In this work we consider the usual topology of $\R^k$.

In general these semigroups are not finitely generated. If $\FraF$ is a finitely generated semigroup we say that $\FraF$ is an affine convex body semigroup. Given a convex polygon or a circle in $\R^2,$ we study the necessary and sufficient conditions for $\FraF$ to be finitely generated. These conditions are related to the slopes of the extremal rays of the minimal cone which includes to $\FraF.$ We give effective methods to obtain their minimal system of generators.

In \cite{Rosales_modular}, the authors present the numerical monoids and semigroups generated by intervals ($F=[\alpha,\beta]\subseteq \R_{\geq}$ with $\alpha<\beta$) called proportionally modular numerical semigroups. They prove proportionally modular numerical semigroups are characterized by a modular Diophantine inequality (see \cite[Theorem 8]{Rosales_modular}). We generalize this modular Diophantine inequality for the convex body monoids and semigroups (see Corollaries  \ref{necesaria_y_suficiente_ecuacion} and  \ref{ecuacion_circulos}).

The minimal system of generators of a proportionally modular numerical semigroup can be obtained by constructing a B\'{e}zout sequence connecting two reduced fractions (see \cite{Rosales_bezout} and \cite{Rosales_modular}). In Lemma \ref{generadores_rayos} it can be found an alternative method to compute this minimal system of generators.

Besides, Lemma \ref{pertenencia_circulo} shows an easy algorithm to check if an element belongs to a circle semigroup, and Corollary \ref{bound} provides a bound for the minimal generators of these semigroups. The implementation of the algorithm to compute the minimal system of generators of a circle semigroup is available at the url \cite{programa}.

The contents of this work are organized as follows.
In Section \ref{s2} we give some concepts and results used during this work.
We also characterize convex body semigroups in terms of Diophantine inequalities.
In Section \ref{s3} some algebraic and geometrical constructions are given.
Section \ref{s4} and \ref{s5} are devoted to characterize the affine semigroups generated by a polygon (polygonal semigroup) or a circle (circle semigroup). The algorithms to compute their minimal systems of generators are showed.
For theses cases in Section \ref{s6} we compute a bound for the minimal generators of the affine semigroup.

\section{Convex semigroups}\label{s2}
Given $\{a_1,\ldots, a_r\}\subseteq \N^k$, we denote by $S=\langle a_1,\ldots, a_r\rangle$ the subsemigroup of $\N^k$ generated by $\{a_1,\ldots, a_r\}$, that is, $\langle a_1,\ldots, a_r\rangle=\{\lambda_1a_1+\cdots+\lambda_ra_r|\, \lambda_1,\ldots,\lambda_r\in \N\}$.
If no proper subset of $\{ a_1,\ldots, a_r\}$ generates $S$, then this set is called the minimal system of generators of $S$.
Every affine semigroup admits a unique minimal generating system (see \cite{Rosales3}).

Define the cone generated by $A\subseteq R^k_{\geq}$ as the set
$$
L_{\Q_{\geq}}(A)=\left\{\sum_{i=1}^p q_ia_i| p\in\N, q_i\in \Q_{\geq}, a_i\in A \right\}.
$$
A ray is a line containing the zero element, $O,$ of $\R^k$. A ray is defined by only one point not equal to $O$.
Given $A\subseteq \R^2_{\geq},$ denote by $\tau_1$ and $\tau_2$ to the extremal rays of $L_{\Q_{\geq}}(A)$ (assume the slope of $\tau_1$ is greater than the slope of $\tau_2$), and by $ \int(A)=A \cap (L_{\Q_{\geq}}(A)\setminus \{\tau_1,\tau_2\})$. We called interior of $A$ to the set $\int(A)$.

Let $F$  be a convex body of $\R^k$ and let $$\CaF=\{X\in \R^k _{\geq}| \textrm{ there exists } i\in \N \textrm{ such that } \frac{X}{i}\in F\}\cup \{0\}=\bigcup_{i=0}^{\infty} F_i,$$
where $F_i=\{iX| X\in F\}$ with $i\in \N$.

\begin{proposition}\label{pr1}
$\CaF$ is a submonoid of $\R ^k.$
\end{proposition}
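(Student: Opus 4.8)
The plan is to show that $\CaF$ is closed under addition and contains the identity $0$, which together establish that it is a submonoid of the additive group $\R^k$. The element $0$ belongs to $\CaF$ by definition, so the entire content of the proposition is the closure under addition. I would phrase the key observation geometrically: an element $X \in \R^k_{\geq}$ lies in $\CaF$ precisely when the ray segment from $O$ through $X$ hits the scaled copy $F_i = iF$ for some positive integer $i$, equivalently when $X/i \in F$ for some $i \in \N$.

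First I would take two nonzero elements $X, Y \in \CaF$, so there exist positive integers $i, j$ with $X/i \in F$ and $Y/j \in F$. The natural candidate is to write $X + Y$ as a multiple of a convex combination of these two points: indeed
\[
\frac{X+Y}{i+j} = \frac{i}{i+j}\cdot\frac{X}{i} + \frac{j}{i+j}\cdot\frac{Y}{j},
\]
which is a convex combination of $X/i \in F$ and $Y/j \in F$. Since $F$ is convex, this point lies in $F$, and hence $X + Y = (i+j)\cdot\frac{X+Y}{i+j} \in F_{i+j} \subseteq \CaF$. The cases where one of $X, Y$ is $0$ are immediate. This also shows $\CaF \subseteq \R^k_{\geq}$, so it is genuinely a subset of a pointed region and the monoid is well defined.

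The main point requiring a little care — though it is not really an obstacle — is that the definition of $\CaF$ in the excerpt explicitly appends $\{0\}$ and that the index $i$ in $F_i$ ranges over $\N$; depending on whether $0 \in \N$ in the authors' convention one must be slightly careful that $F_0 = \{0\}$ is handled, but the convex-combination identity above covers all the substantive cases with $i, j \geq 1$. I do not expect the compactness or nonempty-interior hypotheses on $F$ to play any role here; only convexity is used. So the proof is essentially the one-line convex-combination computation above, and the write-up just needs to dispatch the trivial cases and note that associativity and commutativity are inherited from $\R^k$.
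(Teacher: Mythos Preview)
Your proof is correct and is essentially identical to the paper's own argument: both write $X+Y$ as $(i+j)$ times the convex combination $\frac{i}{i+j}\cdot\frac{X}{i}+\frac{j}{i+j}\cdot\frac{Y}{j}$ of two points of $F$. Your additional remarks about the trivial cases and the irrelevance of compactness and nonempty interior are accurate but not needed for the comparison.
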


\begin{proof}
Let $P,Q\in\CaF$. There exist $i,j\in \N$ and $P',Q'\in F$ such that $P=iP'$ and $Q=jQ'$. Then
$$
P+Q=iP'+jQ'=(i+j)\left(\frac{i}{i+j}P'+(1-\frac{i}{i+j})Q'\right).
$$
Using the convexity of $F$ we obtain $\frac{i}{i+j}P'+(1-\frac{i}{i+j})Q'\in F$ and so $P+Q\in \CaF$.
\end{proof}

We call convex body monoid of $\R^k$ to every submonoid $\CaF$ of $\R^k$ obtained as above from a convex body of $\R^k$.

Denote by $\d(P,Q)$ the Euclidean distance between two elements $P,Q\in\R^k$ and by $\d(P)$ the distance $\d(P,O)$. We see the convexity property is necessary to $\CaF$ be a monoid. If $F$ is the compact and not convex set $$\{X\in \R^2_{\geq}|3\leq \d(X)\leq 5\},$$  the elements $(4,0),(0,4)$ are in $\CaF$ but $(4,0)+(0,4)$ is not in $\CaF$.

Define a convex body semigroup as the intersection of a convex body monoid with $\N^k$. In general, these semigroups are not full affine semigroup, that is, they can not be expressed using linear Diophantine equations (see \cite{Rosales3}). To see this, consider a convex body $F$ of $\R^k$ fulfilling that it has at least an element $P$ satisfying that $P+e_1\in F$, where $e_1$ is the first element of the canonical basis of $\R^k$, and  $e_1\not\in F$. This implies the elements $P,P+e_1\in\CaF$ but $(P+e_1)-P=e_1\not \in\CaF$.

The following result is a generalization of Theorem 8 of \cite{Rosales_modular} and it provides an inequality which characterizes the elements of a convex body monoid of $\R^k$.

Observe that if a ray intersects with $F_1$ in only a point (respectively a segment), then  the intersection of the ray with any other $F_i$ with $i>1$ is also a point (respectively a segment). Denote by $\overline{PQ}$ the segment joining $P$ and $Q$.

\begin{proposition}\label{ecuacion}
Let $\tau$ be a non-negative slope ray. Then, for all $X\in\CaF\cap \tau$ there exist $a,b\in\R_{\geq}$ with $1<a<b$, such that
\begin{equation}\label{ecuacion_inicial}a\cdot\d(X) \mod b \le \d(X).\end{equation}
\end{proposition}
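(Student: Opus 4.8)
The plan is to fix the ray $\tau$ and analyse its intersection with the dilates $F_i$. Since $\tau$ has non-negative slope and $F$ is a convex body contained in $\R^k_{\geq}$, for each $i\in\N$ the intersection $\tau\cap F_i$ is either empty, a single point, or a segment; moreover $\tau\cap F_i=i\cdot(\tau\cap F_1)$, so the ``shape'' of the intersection (point versus segment versus empty) is the same for all $i$ for which it is non-empty, and by the observation recalled just before the statement it suffices to understand $\tau\cap F_1$. First I would record the two key parameters: let $\alpha=\min\{\d(Y)\mid Y\in\tau\cap F_1\}$ and $\beta=\max\{\d(Y)\mid Y\in\tau\cap F_1\}$ (allowing $\alpha=\beta$), assuming $\tau\cap F_1\neq\emptyset$ — if it is empty there is nothing to prove since $\CaF\cap\tau=\{O\}$ and we may pick any admissible $a,b$. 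Rescaling, $\tau\cap F_i$ consists exactly of the points of $\tau$ at distance in $[i\alpha,i\beta]$ from $O$.

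The heart of the argument is then to translate ``$X\in\CaF\cap\tau$'' into a congruence. By definition $X\in F_i$ for some $i$, i.e.\ $\d(X)\in[i\alpha,i\beta]$ for some positive integer $i$, equivalently there exists $i\in\N$ with $i\alpha\le \d(X)\le i\beta$. This is precisely the condition appearing in the theory of proportionally modular numerical semigroups: setting $d=\d(X)$, the existence of such an $i$ is equivalent to a modular inequality of the form $a\cdot d \bmod b \le c\cdot d$ for suitable constants. Concretely, I would take the interval $[\alpha,\beta]$, observe $d\in\bigcup_i[i\alpha,i\beta]$ iff the fractional part $\{d/\beta\}$ (times $\beta$) is controlled by $d(1/\alpha-1/\beta)$, and unwind this into the shape $a\cdot d\bmod b\le d$ by choosing, say, $b$ proportional to $1/\alpha$ and $a$ proportional to $b/\beta$ after clearing denominators; the normalisation $1<a<b$ is arranged by scaling, using that $\alpha>1$ is \emph{not} guaranteed a priori, so one may first have to replace $F$ by a dilate or argue that $X\in\tau$ forces $\d(X)$ large enough — more carefully, since $X\in\CaF$ and $X\neq O$, one has $\d(X)\ge\alpha$ and one can choose the multiplicative constants to land in the required range. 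This mirrors exactly the computation behind \cite[Theorem 8]{Rosales_modular}, now read off one ray at a time.

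The main obstacle I anticipate is the bookkeeping needed to guarantee the strict inequalities $1<a<b$ simultaneously with the congruence being equivalent (not merely implied) — in the numerical case one works with a fixed interval, whereas here $\alpha,\beta$ depend on the ray $\tau$ and need not be rational, and the point $X$ ranges over all of $\CaF\cap\tau$, so the constants $a,b$ must be chosen uniformly in $X$ but may depend on $\tau$. I would handle this by first proving the clean equivalence ``$d\in\bigcup_{i\ge 1}[i\alpha,i\beta]$ iff $\lceil d/\beta\rceil\le d/\alpha$'', then massaging $\lceil d/\beta\rceil\le d/\alpha$ into the stated modular form by writing $\lceil d/\beta\rceil=(d + (b'-d\bmod b'))/b'$ for an appropriate integer scaling $b'$ of $\beta$, and finally absorbing constants to meet $1<a<b$. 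Everything else (the reduction to $F_1$, the point/segment dichotomy, the empty case) is routine given Proposition~\ref{pr1} and the observation preceding the statement.
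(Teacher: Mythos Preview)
Your overall reduction---intersect $\tau$ with $F$, pass to distances, and land in the interval framework of \cite{Rosales_modular}---is exactly what the paper does. But you are making two aspects harder than the statement requires, and this accounts for the ``obstacle'' you anticipate.

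First, reread the quantifiers: the statement is $\forall X\,\exists a,b$, so $a$ and $b$ are allowed to depend on $X$, not only on $\tau$. Your worry that ``the constants $a,b$ must be chosen uniformly in $X$'' is therefore misplaced. The paper exploits this freedom directly. When $\tau\cap F_i$ is the single point $X=iP$, it simply picks any $a>1$ and sets $b=ai\,\d(P)=a\,\d(X)$, so that $a\,\d(X)\bmod b=0\le\d(X)$; no fractional-part manipulation or rescaling is needed. When $\tau\cap F_i$ is a segment $\overline{PQ}$, the paper just observes that $\d(X)$ lies in the submonoid of $\R_{\ge}$ generated by the interval $[\d(P),\d(Q)]$ and quotes \cite[Theorem~8]{Rosales_modular} verbatim to produce $a,b$ (depending on $P,Q$, hence on $\tau$). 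That is the entire argument.

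Second, the proposition asserts only the forward implication (membership $\Rightarrow$ inequality), not an equivalence. Your plan to massage $\lceil d/\beta\rceil\le d/\alpha$ into an \emph{equivalent} modular condition, and your concern about the congruence being ``equivalent (not merely implied)'', belong to the subsequent corollary, where the interior case and the explicit constants $a=\d(Q)/(\d(Q)-\d(P))$, $b=\d(P)\d(Q)/(\d(Q)-\d(P))$ are recorded. For the present proposition those refinements are unnecessary.
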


\begin{proof}
If $X\in \CaF \cap \tau$, then there exists $i\in\N$ such that $X\in F_i$. If $i=0$, then $X=0$ and there exist $a,b\in \R_{\geq}$ such that the inequality is clearly satisfied.

Assume that $X\in F_i$, with $i>0$.
Observe the intersection $\tau \cap F_i$, can be only a
point or a segment.
If $\tau \cap F_i=\{X\}$  then
there exists $P\in F$ such that $X=iP$ and $\d(X)=i\d(P).$
Taking now a number $a\in (1, \infty)$ we obtain $a<ai$ and
$a \d(X)\mod ai\d(P)=0\le \d(X).$
 If $\tau \cap F_i=\overline{PQ}$ (assume
$\d(P)<\d(Q)$), then $X\in i \overline{PQ}$ and  $\d(X)$ belongs to a submonoid of
$\R_{\geq}$ generated by $[\d(P),\d(Q)].$ By
\cite[Theorem 8]{Rosales_modular}, we conclude there exist
$a,b\in (1,\infty)$ with  $b>a$ such that $a\d(X)\mod b \le \d(X).$
\end{proof}

From the above proposition it can be deduced that $a$ and $b$ depend only of the vector $\overrightarrow{OX}$. This fact allows us to characterize the elements of a convex body semigroup from an inequality. Denote by $\tau$ the ray containing the point $X.$

\begin{corollary}\label{necesaria_y_suficiente_ecuacion}
An element $X\in \N^k$ belongs to $\int(\CaF)$ if and only if the following conditions are fulfilled:
\begin{enumerate}
\item $\tau \cap F$ is a segment $\overline{PQ}$ with $P,Q\in\int(\CaF)$.
\item $\displaystyle{\frac{\d(Q)}{\d(Q)-\d(P)}} \d(X) \mod  \displaystyle{\frac{\d(P)\d(Q)}{\d(Q)-\d(P)}} \le \d(X).$
\end{enumerate}
\end{corollary}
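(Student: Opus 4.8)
The plan is to reduce the statement to the one-dimensional case already established in \cite[Theorem 8]{Rosales_modular}, by passing from the point $X$ on the ray $\tau$ to its Euclidean distance $\d(X)$, and analysing the submonoid of $\R_{\geq}$ that $\tau$ cuts out of $\CaF$. The first step is to observe, as in Proposition \ref{ecuacion}, that $X\in\int(\CaF)$ forces $\tau$ to be one of the non-extremal rays of the cone $L_{\Q_{\geq}}(\CaF)$, so $\tau\cap F$ is either a point or a genuine segment; I would then rule out the single-point case for elements of the interior and thus establish the necessity of condition (1), that $\tau\cap F=\overline{PQ}$ with $P,Q$ interior points (interiority of $P$ and $Q$ being inherited from the fact that $\tau$ is non-extremal and $F\subseteq F_1\subseteq\CaF$).

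Next, for the equivalence with condition (2), the key observation is the scaling identity $\tau\cap F_i=i\,\overline{PQ}$ (noted just before Proposition \ref{ecuacion}), which shows that the set $\{\d(Y): Y\in\CaF\cap\tau\}$ is exactly the numerical-type submonoid of $\R_{\geq}$ generated by the interval $[\d(P),\d(Q)]$, i.e.\ $\bigcup_{i\geq 0} i\,[\d(P),\d(Q)]$. A point $X\in\N^k$ lying on $\tau$ belongs to $\int(\CaF)$ precisely when $\d(X)$ lies in this submonoid. Now I invoke \cite[Theorem 8]{Rosales_modular}: the proportionally modular numerical semigroup associated to $[\alpha,\beta]=[\d(P),\d(Q)]$ is the set of non-negative reals $s$ with $\tfrac{\beta}{\beta-\alpha}\,s \bmod \tfrac{\alpha\beta}{\beta-\alpha} \le s$. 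Substituting $\alpha=\d(P)$, $\beta=\d(Q)$ gives exactly the inequality in (2), so $\d(X)$ satisfies (2) iff $\d(X)$ is in the submonoid iff $X\in\int(\CaF)$.

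The remaining technical point is to make sure the passage between ``$X\in F_i$ for some $i$'' and ``$\d(X)\in i[\d(P),\d(Q)]$ for some $i$'' is a genuine equivalence and not merely one implication: this uses that on a fixed ray the Euclidean distance is a linear bijection onto $\R_{\geq}$, so that $\d(X)\in i[\d(P),\d(Q)]$ really does pull back to a point of the segment $i\overline{PQ}=\tau\cap F_i$. One also has to check the degenerate possibility $\d(P)=\d(Q)$ does not arise here — it cannot, since $\tau\cap F$ was assumed to be a (non-degenerate) segment — and that the hypotheses $1<\alpha<\beta$ needed to apply \cite[Theorem 8]{Rosales_modular} are available; when $\d(P)\le 1$ one rescales, or one appeals to the version of that theorem valid for arbitrary $0<\alpha<\beta$, exactly as was implicitly done in the proof of Proposition \ref{ecuacion}.

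I expect the main obstacle to be the careful treatment of condition (1) as a genuine part of the characterisation: namely, arguing cleanly that an interior point $X$ cannot sit on a ray meeting $F$ in a single point, and that the endpoints $P,Q$ of the segment are themselves forced to be interior elements of $\CaF$ (so that condition (1) is self-consistent). Everything else is a translation of the known one-dimensional result along the distance map, which is routine once the geometry of $\tau\cap F_i=i\,\overline{PQ}$ is in place.
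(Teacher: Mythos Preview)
Your proposal is correct and follows essentially the same approach as the paper: the paper's proof is the single line ``It is straightforward from Proposition \ref{ecuacion} and the proof of Theorem 8 in \cite{Rosales_modular},'' and what you have written is a careful unpacking of exactly that reduction---passing along $\tau$ via the distance map to the interval $[\d(P),\d(Q)]$ and invoking the proportionally modular characterisation. Your additional attention to condition (1) and to the hypotheses on $\d(P),\d(Q)$ is more than the paper provides, but the underlying strategy is identical.
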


\begin{proof}
It is straightforward from Proposition \ref{ecuacion} and the proof of Theorem 8 in \cite{Rosales_modular}.
\end{proof}

\section{Tools}\label{s3}
Let $F$ be a convex body  of $\R_{\geq}^2$ and $\tau_1, \tau_2$  the extremal rays of $L_{\Q_{\geq}}(F)$ (assume the slope of $\tau_1$ is greater than the slope of $\tau_2$). Observe that $\CaF$ is contained in the cone $L_{\Q_{\geq}}(F).$ The subsemigroup $L_{\Q_{\geq}}(F)\cap\N^2$ is denoted by $\FraC$.

In general for every  semigroup equal to the set of non-negative integer solutions of a system of inequalities (for instance $\FraC$), its minimal system of generators can be determined by obtaining the minimal solutions of a system of Diophantine equations (see \cite{Evelyne2} and \cite{Nsol}).

\begin{lemma}\label{lema_rectangulo}
Let $\tau$ be a rational slope ray, $g, s\in \tau\cap \N^2$ and $\overrightarrow{u}\in\R^2$. Define $R_i$ the parallelogram determined by the elements
$g+(i-1)s$, $g+is$ and $g+(i-1)s + \overrightarrow{u}$ with $i\in \N$. If $R_1\subset\R^2_{\geq}$, then $R_{i}\cap \N^2=(R_{1}\cap \N^2)+(i-1)s$.
\end{lemma}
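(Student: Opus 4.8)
The plan is to exploit the fact that $R_i$ is just $R_1$ translated by the vector $(i-1)s$. Indeed, since $R_1$ is the parallelogram spanned by the vertex $g$ and the edge vectors $\overrightarrow{v}:=s$ (from $g$ to $g+s$) and $\overrightarrow{u}$ (from $g$ to $g+\overrightarrow{u}$), a point lies in $R_1$ iff it can be written as $g+\mu\overrightarrow{v}+\nu\overrightarrow{u}$ with $\mu,\nu\in[0,1]$. Likewise $R_i$ is spanned by the vertex $g+(i-1)s$ and the same two edge vectors $\overrightarrow{v}=(g+is)-(g+(i-1)s)$ and $\overrightarrow{u}$; hence $X\in R_i$ iff $X-(i-1)s\in R_1$. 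So as \emph{sets of points} we have the identity $R_i=R_1+(i-1)s$; this is the geometric heart of the argument and it is essentially immediate once one writes the parallelograms in this parametric form. The only thing to be careful about is that the three listed points $g+(i-1)s$, $g+is$, $g+(i-1)s+\overrightarrow{u}$ do determine a genuine parallelogram (the fourth vertex being $g+is+\overrightarrow{u}$), i.e. that $\overrightarrow{u}$ is not parallel to $s$; if it were, $R_i$ would be degenerate, but the statement and its intended use presuppose a nondegenerate parallelogram, and the set identity $R_i=R_1+(i-1)s$ holds in the degenerate case as well.

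Next I would pass from points to lattice points. Since $g,s\in\tau\cap\N^2\subseteq\N^2$, the translation vector $(i-1)s$ lies in $\Z^2$, so $X\mapsto X+(i-1)s$ is a bijection of $\Z^2$ onto itself, and in particular it maps $\N^2$ into $\Z^2$. Intersecting the set identity $R_i=R_1+(i-1)s$ with $\N^2$ and using that $(i-1)s\in\Z^2$ gives
\[
R_i\cap\N^2=(R_1+(i-1)s)\cap\N^2=\bigl((R_1\cap\Z^2)+(i-1)s\bigr)\cap\N^2.
\]
Here the hypothesis $R_1\subset\R^2_{\geq}$ enters: because $R_1$ lies in the closed first quadrant and $s$ has non-negative coordinates (it lies on $\tau\cap\N^2$), every translate $R_1+(i-1)s$ with $i\geq 1$ also lies in $\R^2_{\geq}$; hence a lattice point of $R_i$ is automatically in $\N^2$, and conversely a point of $R_1\cap\N^2$ translated by $(i-1)s\in\N^2$ stays in $\N^2$. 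Therefore the intersection with $\N^2$ can be dropped on the right and $R_i\cap\N^2=(R_1\cap\N^2)+(i-1)s$, as claimed.

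The main (minor) obstacle is bookkeeping rather than mathematics: making sure the quadrant/sign conditions are invoked correctly so that ``$\cap\,\N^2$'' and ``$+(i-1)s$'' genuinely commute — this needs both $R_1\subset\R^2_{\geq}$ and $s\in\N^2$ (equivalently, $s$ has non-negative entries), and it is exactly at this point that one could slip if $s$ were allowed arbitrary sign. Once the set-level translation identity $R_i=R_1+(i-1)s$ is recorded, everything else is a one-line consequence of $(i-1)s$ being an integer vector with non-negative coordinates.
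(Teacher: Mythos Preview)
Your proof is correct and follows essentially the same approach as the paper: you establish the set identity $R_i=R_1+(i-1)s$, use $s\in\N^2$ to deduce $R_i\cap\Z^2=(R_1\cap\Z^2)+(i-1)s$, and then invoke $R_1\subset\R^2_{\geq}$ (together with $(i-1)s\in\N^2$) to pass from $\Z^2$ to $\N^2$. The paper's argument is the same three steps, only stated more tersely.
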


\begin{proof}
By construction $R_i=R_1+(i-1)s$ for every $i\in\N.$ Since $s\in\N^2$, then $R_{i}\cap \Z^2=(R_{1}\cap \Z^2)+(i-1)s.$ In case $R_1\subset \R^2_{\geq},$ we obtain that $R_{i}\cap \N^2=(R_{1}\cap \N^2)+(i-1)s.$
\end{proof}

\begin{lemma}\label{semigrupo_segmento_racional}
Let $P,Q\in \Q_{\geq}$ (respectively $P,Q\in\Q^2_{\geq}$). The semigroup $\FraI= \big(\bigcup _{i \in \N}i\overline{PQ}\big) \cap \N $ (respectively $\FraI= \big(\bigcup _{i \in \N}i\overline{PQ}\big) \cap \N ^ 2$) is finitely generated and there exists an algorithm to determine its minimal system of generators.
\end{lemma}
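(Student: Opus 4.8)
The plan is to reduce everything to the one-dimensional case, which is exactly the proportionally modular numerical semigroup situation of \cite{Rosales_modular}, and then to transport the resulting finiteness and the algorithm back to the segment in $\R^2_{\geq}$ by means of a rational parametrization. First consider the scalar case $P,Q\in\Q_{\geq}$ with, say, $P<Q$. Then $\bigcup_{i\in\N} i\overline{PQ}=\bigcup_{i\in\N}[iP,iQ]$, and intersecting with $\N$ gives the proportionally modular numerical semigroup associated to the interval $[P,Q]$; by \cite[Theorem 8]{Rosales_modular} (or, more directly, by the B\'ezout sequence construction of \cite{Rosales_bezout,Rosales_modular} or by Lemma \ref{generadores_rayos}) this semigroup is finitely generated and its minimal system of generators is algorithmically computable. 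Clearing denominators, write $P=p_1/q$ and $Q=p_2/q$ with $p_1,p_2,q\in\N$; scaling an interval by $q$ does not change which positive integers lie in $\bigcup_i i[P,Q]$ up to the obvious bookkeeping, so one may as well take $P,Q\in\N$ from the start and invoke the cited results verbatim.

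For the planar case, let $P,Q\in\Q^2_{\geq}$. If $P$ and $Q$ lie on the same ray $\tau$ through the origin, then $\overline{PQ}\subseteq\tau$ and, choosing the primitive lattice vector $v\in\N^2$ on $\tau$, every lattice point of $\tau\cap\R^2_{\geq}$ is an integer multiple of $v$; writing $P=\alpha v$, $Q=\beta v$ with $\alpha,\beta\in\Q_{\geq}$ reduces the problem to the scalar case along the single parameter $t$ with $tv\in\N^2 \iff t\in\N$. If instead $P$ and $Q$ are not collinear with $O$, the key observation is that $\bigcup_{i\in\N} i\overline{PQ}$ is the set of points of the form $i\bigl((1-\lambda)P+\lambda Q\bigr)=(i-j)P+jQ$ where $i\in\N$ and $j=i\lambda$ ranges over $[0,i]$; equivalently it is $\{\,\mu P+\nu Q \mid \mu,\nu\in\Q_{\geq}\,\}$ intersected with the cone, and hence $\FraI=L_{\Q_{\geq}}(\{P,Q\})\cap\N^2$. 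But $L_{\Q_{\geq}}(\{P,Q\})$ is a rational polyhedral cone in $\R^2$, so $\FraI$ is the set of non-negative integer solutions of a finite system of linear Diophantine inequalities; by Gordan's lemma (equivalently, by the remark just before this lemma citing \cite{Evelyne2,Nsol}) it is finitely generated and its minimal generating set is computable, for instance via the normaliz-type algorithms or via the minimal-solutions-of-Diophantine-equations approach already invoked for $\FraC$.

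The main point requiring care — and the place I expect the real work to sit — is the identity $\bigcup_{i\in\N} i\overline{PQ}=L_{\Q_{\geq}}(\{P,Q\})$ in the non-collinear planar case: the inclusion ``$\subseteq$'' is immediate, but for ``$\supseteq$'' one must check that every $\mu P+\nu Q$ with $\mu,\nu\in\Q_{\geq}$ can be rescaled to land in some $i\overline{PQ}$, i.e. that taking $i$ large enough makes $(\mu+\nu)/i\le 1$ while keeping the barycentric coordinate $\nu/(\mu+\nu)\in[0,1]$ — which is clear, but one should also confirm the case $\mu+\nu=0$ (giving $O$) and that no lattice points are lost when $\mu+\nu$ is not itself rational-with-small-denominator. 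Once this set-theoretic identification is in hand, finite generation and the existence of the algorithm are immediate from the cited general facts about cones and Diophantine systems, and the scalar case is precisely \cite[Theorem 8]{Rosales_modular} together with Lemma \ref{generadores_rayos}.
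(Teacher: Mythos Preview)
Your scalar case and the collinear planar subcase are fine, but the non-collinear planar case contains a genuine error: the identity
\[
\bigcup_{i\in\N} i\,\overline{PQ}\;=\;L_{\Q_{\geq}}(\{P,Q\})
\]
is simply false. From your own parametrization $i\bigl((1-\lambda)P+\lambda Q\bigr)=aP+bQ$ with $a+b=i$, what you actually obtain is
\[
\bigcup_{i\in\N} i\,\overline{PQ}\;=\;\{\,aP+bQ : a,b\in\R_{\geq},\ a+b\in\N\,\},
\]
which is a countable union of parallel segments, not the full cone. The step ``taking $i$ large enough makes $(\mu+\nu)/i\le 1$'' does not help: membership in $i\,\overline{PQ}$ forces $\mu+\nu=i$ exactly, not $\mu+\nu\le i$. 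For a concrete counterexample take $P=(2,0)$, $Q=(0,2)$: then $L_{\Q_{\geq}}(\{P,Q\})\cap\N^2=\N^2$, but every point of $\bigcup_i i\,\overline{PQ}$ has even coordinate sum, so $(1,0)\notin\FraI$ while $(1,0)$ does lie in the cone semigroup. Thus $\FraI$ is \emph{not} the lattice points of a two-dimensional rational cone, and Gordan's lemma in $\R^2$ does not apply directly.

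The paper's remedy is exactly to record the missing datum $a+b=i$ as an extra coordinate: one lifts $P,Q$ to $P'=(P,1),\,Q'=(Q,1)\in\Q^3$ and works with a finitely generated semigroup $\FraC'\subset\N^3$ cut out by the plane through $O,P',Q'$ together with the two bounding planes of the cone over $\overline{P'Q'}$. A point $(x,y,i)\in\FraC'$ then satisfies $(x,y)\in i\,\overline{PQ}$, and projecting onto the first two coordinates recovers $\FraI$. So the lifting is not a stylistic choice but the device that turns the stratified set $\bigcup_i i\,\overline{PQ}$ into an honest rational cone; without it your argument collapses at the displayed identity.
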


\begin{proof}
Assume that $\overline{PQ}\subset \R _{\geq}$. The elements $P'=(P,1)$ and $Q'=(Q,1)$ belong to $\Q^2_{\geq}$. Denote by $\FraC'$ the semigroup $L_{\mathbb{Q}_{\geq}}(\{P',Q'\})\cap \N^2$. The set $\FraC'$ is determined by the rational systems of inequalities given by the two rays containing the points $P'$ and $Q'$. Thus $\FraC'$ is finitely generated. The semigroup $\FraI$ is the projection onto the first coordinate of the elements of $\FraC'$ and therefore it is finitely generated.

Let consider now the case $\overline{PQ}\subset \R ^2_{\geq}.$ Define again $P'=(P,1)$ and $Q'=(Q,1)$ elements of $\Q^3$. Take now $\overrightarrow{u}$ a normal vector to the subspace $\langle \overrightarrow{OP'},\overrightarrow{OQ'}\rangle$ and two vectorial planes $\pi_1$ and $\pi_2$  generated by $\{\overrightarrow{OP'},\overrightarrow{u}\}$ and $\{\overrightarrow{OQ'},\overrightarrow{u}\}$ respectively.

Let $\FraC'$ be the semigroup finitely generated by the minimal solutions of the system of rational inequalities determined by the plane containing the points $\{O,P',Q'\},$ and the cone delimited by $\pi_1$ and $\pi_2.$ Since $\FraI$ is the projection onto the first and second coordinate of $\FraC'$, it is finitely generated.

In both cases the minimal system of generators of $\FraI$ is obtained by an effective way from the set given by the projection of a system of generators of $\FraC'.$ A minimal system of generators of $\FraC'$ can be computed from the solutions of a system of Diophantine inequalities.
\end{proof}

\begin{lemma}\label{generadores_rayos}
Let $\tau$ be a ray and $\overline{PQ}$ a segment over $\tau$ with $P,Q\in\R^2\setminus\Q^2$ (assume $\d(P)<\d(Q)$). Then the semigroup $\FraI=\left(\bigcup_{i\in\N}i\overline{PQ}\right)\cap\N^2$ is finitely generated and there exists an algorithm for computing its minimal system of generators.
\end{lemma}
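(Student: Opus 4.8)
The plan is to reduce this irrational case to the rational case already handled in Lemma \ref{semigrupo_segmento_racional}, exploiting the fact that $\FraI$ lives in $\N^2$, which is discrete. The key observation is that although $P$ and $Q$ have irrational coordinates, the semigroup $\FraI=\left(\bigcup_{i\in\N}i\overline{PQ}\right)\cap\N^2$ only ``sees'' the segments $i\overline{PQ}$ through the integer lattice points they contain, and the lattice is insensitive to small perturbations of the endpoints. Concretely, the union $\bigcup_{i\in\N}i\overline{PQ}$ is a planar region contained in the cone generated by $\overrightarrow{OP}$ and $\overrightarrow{OQ}$; its integer points are exactly the lattice points $X$ lying in this cone for which the ray through $X$ meets the segment $\overline{PQ}$ at the correct scale, i.e.\ such that $\d(X)$ falls in the appropriate interval along its ray. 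This is governed, via Proposition \ref{ecuacion} and the proportionally modular description, by the two inequalities coming from $\d(P)$ and $\d(Q)$ on each ray through a lattice point of the cone.

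First I would set up, as in Lemma \ref{semigrupo_segmento_racional}, the lifted points $P'=(P,1)$, $Q'=(Q,1)\in\R^2\setminus\Q^2$ lying in the plane $z=1$ of $\R^3$, so that $\bigcup_{i\in\N}i\overline{P'Q'}$ is the ``staircase'' region in the cone $L_{\Q_{\geq}}(\{P',Q'\})$ and $\FraI$ is its projection onto the first two coordinates intersected with $\N^2$. Next I would argue that the set of lattice points captured does not change if we replace $P',Q'$ by suitably chosen \emph{rational} points $\widetilde{P'},\widetilde{Q'}$ close to them: since the cone $L_{\Q_{\geq}}(\{P',Q'\})$ contains only a ``thin'' set of lattice directions and each relevant integer point sits at positive distance from the boundary rays $\tau_1,\tau_2$ of the region, a small enough perturbation of the endpoints (shrinking $\overline{P'Q'}$ slightly inward along its ray, or tilting the bounding planes inward) leaves invariant both which rays through lattice points meet the segment and the interval constraints $\d(\widetilde{P'})\le \d(X)\le\d(\widetilde{Q'})$ that decide membership. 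Once the endpoints are rational, Lemma \ref{semigrupo_segmento_racional} applies and yields finite generation together with an algorithm.

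The main obstacle — and the step requiring genuine care — is making the perturbation argument effective and correct: I must show (a) that $\FraI$ is actually finitely generated, which needs a uniform bound guaranteeing that beyond some index $i_0$ the segments $i\overline{PQ}$ contribute no new generators (here Lemma \ref{lema_rectangulo}, applied with $g,s$ chosen as the first lattice point on $\tau$ and a lattice generator of $\tau$, lets one reduce the infinitely many translated parallelograms to finitely many residues, so only finitely many initial segments matter); and (b) that the perturbed rational endpoints can be \emph{computed}, i.e.\ that one can algorithmically determine a rational $\varepsilon$-neighbourhood small enough not to gain or lose any of the finitely many relevant lattice points. Point (b) is delicate because it requires deciding, for each of finitely many candidate lattice points, whether it lies on the irrational segment $i\overline{PQ}$ — but since $P,Q$ are given by explicit algebraic (or at least computable) data, comparing $\d(X)$ against the interval endpoints along each ray is decidable, and after fixing the finite list of relevant points one simply takes a rational interval strictly between the largest ``inside'' distance and the smallest ``outside'' distance on each ray. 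With finite generation and computability of the rational replacement in hand, the conclusion follows from Lemma \ref{semigrupo_segmento_racional} exactly as in its proof, projecting a computed system of generators of the associated $\FraC'$.
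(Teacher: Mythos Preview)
Your overall strategy---replace the irrational endpoints by nearby rational ones capturing the same lattice points, then invoke Lemma~\ref{semigrupo_segmento_racional}---is exactly the paper's. But the execution rests on a misreading of the hypothesis: the segment $\overline{PQ}$ lies \emph{on} the ray $\tau$, so $P,Q\in\tau$, the vectors $\overrightarrow{OP}$ and $\overrightarrow{OQ}$ are collinear, and there is no two-dimensional ``cone generated by $\overrightarrow{OP}$ and $\overrightarrow{OQ}$''. The union $\bigcup_i i\overline{PQ}$ is a subset of the single line $\tau$, not a planar region; there are no boundary rays $\tau_1,\tau_2$, no ``bounding planes'' to tilt, and Lemma~\ref{lema_rectangulo} on parallelograms has no role. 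If $\tau$ has irrational or negative slope then $\tau\cap\N^2=\{0\}$ and $\FraI=\{0\}$; otherwise the problem is one-dimensional---$\FraI$ consists of those multiples $nv$ of the primitive lattice vector $v$ on $\tau$ for which some $i\in\N$ satisfies $i\,\d(P)\le n\,\d(v)\le i\,\d(Q)$.

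With the picture corrected the paper's argument is short. Since $\d(P)<\d(Q)$ there is a least $k$ with $k\,\d(Q)\ge(k+1)\,\d(P)$; from that index on the dilates overlap, so the half-ray beyond $(k+1)P$ lies entirely in $\bigcup_i i\overline{PQ}$ and only the finite set $T=\overline{O\,((k+1)P)}\cap\N^2$ matters. Because $P,Q\notin\Q^2$, no point of $T$ equals any endpoint $iP$ or $iQ$; one computes the minimum (suitably scaled) distance from $T$ to these endpoints and \emph{enlarges} $\overline{PQ}$ along $\tau$ by less than that amount, so that any rational $\overline{P''Q''}$ with $\overline{PQ}\subset\overline{P''Q''}$ inside the enlargement yields $\big(\bigcup_i i\overline{P''Q''}\big)\cap\N^2=\FraI$. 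Lemma~\ref{semigrupo_segmento_racional} then finishes. Your items~(a) and~(b) are precisely these two ingredients---the overlap index $k$ and the effective perturbation bound from $T$---and become elementary once stated on the single ray. Note also that enlarging is the right direction: it automatically preserves the overlap of consecutive dilates, whereas the shrinking you propose could destroy it and would require an extra constraint on the perturbation to keep the tail covered.
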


\begin{proof}
If $\tau$ has negative or irrational slope then $\left( \bigcup _{i\in \N} i\overline{PQ} \right) \cap \N^2=\emptyset,$ and therefore the result is straightforward.

Assume the slope of $\tau$ is not negative and rational. Let $k$ be the smallest positive integer fulfilling that $kQ-(k+1)P\in \R ^2_{\geq}.$ By construction the integer $k$ exists  and it can be determined, then the ray with vertex $(k+1)P$ and determined by $\overrightarrow{PQ}$ is included in the monoid $\bigcup_{i\in\N}i\overline{PQ}$.

Let $T$ be the finite set $\overline{O((k+1)P)} \cap \N^2$ and let
$$d_1=\min \left(\bigcup_{i=1}^{k+1}\{d(H,iP)| H\in T\}\right)/(k+1),$$
$$d_2=\min \left(\bigcup_{i=1}^{k+1}\{d(H,iQ)| H\in T\}\right)/k.$$

Consider the segment $\overline{P ' Q'}$ with $P '=P-d_1\overrightarrow{u}$ and $Q'=Q+d_2\overrightarrow{u}$ where $\overrightarrow{u}$ is the unitary direction vector of $\tau.$

The segment $\overline{P ' Q'}$ verifies that $\overline{PQ} \subset \overline{P ' Q'}$ and that for every segment $\overline{P '' Q''}$ with $P''$ and $Q''$ rational points such that $\overline{PQ}\subset \overline{P '' Q''} \subset \overline{P ' Q'},$ we have that $\FraI=\left( \bigcup _{i\in \N} i\overline{P '' Q''}\right) \cap \N^2.$

Since $P''$ and $Q''$ are rational, by Lemma \ref{semigrupo_segmento_racional}, we conclude that $\FraI$ is finitely generated.

The minimal system of generators of $\FraI$ can be computed in an effective way following the steps of this proof:
\begin{itemize}
\item Compute the smallest $k\in\N$ such that $kQ-(k+1)P\in \R ^2_{\geq}.$
\item Compute the set $T$ and the values $d_1$ and $d_2$.
\item Compute the vector $\overrightarrow{u}$ and take the rational points $P''$ and $Q''$.
\item Apply Lemma \ref{semigrupo_segmento_racional}.
\end{itemize}
\end{proof}

In particular, the above result can be used to obtain a system of generators of a proportionally modular semigroup. This is an alternative method to the one presented in \cite{Rosales_modular}.

The following results are used to find system of generators of convex body semigroups.

\begin{lemma}\label{sg_S_prima2}
Let $\{g_1,\ldots ,g_p\}\subset \N^2$ be the minimal system of generators of a semigroup $\FraF$ and $\tau=g_1\Q$ an extremal ray of  $\FraF$.
Assume that $g_1$ generates $\N^2 \cap \tau$ and consider $\{s_1,\ldots ,s_t\}$ the minimal system of generators of a subsemigroup of $\N^2\cap \tau$. Let  $\FraF'$ be the semigroup generated by  $B=B_1\cup B_2$ with
\begin{equation*}
B_1=\Big\{s_1,\ldots, s_t,g_2,\ldots ,g_p\Big\},\, B_2=
\bigcup _{i=2}^p \{g_i+g_1, \ldots, g_i+(\lambda_t-1)g_1\}
,\end{equation*} where  $0<\lambda_1<\cdots < \lambda _t$ are the integers such that  $s_i=\lambda _i g_1.$ Then the semigroup $\FraF'$ verifies:
\begin{itemize}
\item $ \FraF'\cap \tau = \langle s_1,\ldots ,s_t \rangle.$
\item $\FraF'\setminus \tau = \FraF \setminus \tau.$
\end{itemize}
\end{lemma}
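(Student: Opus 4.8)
The plan is to record two preliminary facts and then prove the two assertions in turn. \emph{Preliminary facts.} First, each element of $B$ is an $\N$-linear combination of $g_1,\ldots,g_p$ (clear for $s_i=\lambda_ig_1$, for $g_i$, and for $g_i+kg_1$), so $\FraF'=\langle B\rangle\subseteq\FraF$. Second, among the elements of $B$ exactly $s_1,\ldots,s_t$ lie on $\tau$: since $\tau=g_1\Q$ is a line through $O$ we have $g_i+kg_1\in\tau$ iff $g_i\in\tau$; and if $g_i\in\tau$ for some $i\geq 2$, then, using that $g_1$ generates $\N^2\cap\tau$, we would get $g_i=mg_1$ with $m\geq 1$, which either forces $g_i=g_1$ or exhibits $g_i$ as a sum of other generators, contradicting the minimality of $\{g_1,\ldots,g_p\}$.

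For the first assertion, $\langle s_1,\ldots,s_t\rangle\subseteq\FraF'\cap\tau$ is immediate from $s_1,\ldots,s_t\in B\cap\tau$. For the reverse inclusion I would use that $\tau$, being an extremal ray of the cone $L_{\Q_{\geq}}(\FraF)$ (which contains $\FraF'$), is a face of that cone: choosing coordinates in which $\tau$ is the non-negative first axis and the second coordinate is non-negative on the cone, an element of the cone lies off $\tau$ exactly when its second coordinate is positive; hence in any expression of an element of $\tau$ as an $\N$-combination of cone elements only summands on $\tau$ occur. Applying this to $x\in\FraF'\cap\tau$ written over $B$, and invoking the second preliminary fact, every generator occurring with positive coefficient is among $s_1,\ldots,s_t$, so $x\in\langle s_1,\ldots,s_t\rangle$.

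For the second assertion, $\FraF'\setminus\tau\subseteq\FraF\setminus\tau$ follows from $\FraF'\subseteq\FraF$. For the reverse, take $x\in\FraF\setminus\tau$ and fix a representation $x=n_1g_1+\sum_{i=2}^pn_ig_i$ with $n_i\in\N$. Since $x\notin\tau$ while $n_1g_1\in\tau$, some $n_{i_0}$ with $i_0\geq 2$ is positive. Writing $n_1=k\lambda_t+n_1'$ with $0\leq n_1'\leq\lambda_t-1$ gives
$$x=k\,s_t+(g_{i_0}+n_1'g_1)+(n_{i_0}-1)g_{i_0}+\sum_{i\geq 2,\ i\neq i_0}n_ig_i,$$
and every term on the right is an $\N$-multiple of an element of $B$: $s_t\in B$; $g_{i_0}+n_1'g_1\in B$ (it lies in $B_2$ if $1\leq n_1'\leq\lambda_t-1$ and equals $g_{i_0}\in B_1$ if $n_1'=0$); and $g_i\in B_1$ for $i\geq 2$. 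Hence $x\in\FraF'$, and $x\notin\tau$, so $x\in\FraF'\setminus\tau$.

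The routine parts are the minimality argument in the second preliminary fact and the coefficient bookkeeping in the displayed identity. The step that genuinely uses the shape of $B$ --- and which I regard as the main point --- is the last one: $B_2$ is designed precisely so that, once as many copies of $s_t$ as possible have been extracted from the $g_1$-part, the at most $\lambda_t-1$ remaining copies of $g_1$ can be attached to a single off-$\tau$ generator; this, together with the face property of the extremal ray $\tau$ used in the first assertion, is the heart of the argument.
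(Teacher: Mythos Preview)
Your proof is correct and follows essentially the same route as the paper's: write $x\in\FraF\setminus\tau$ over the $g_i$, peel off as many copies of $s_t=\lambda_t g_1$ as possible, and attach the residual $g_1$'s to an off-$\tau$ generator via $B_2$. You are in fact more careful than the paper, which asserts the first item without justification and, in its case $\mu_1\geq\lambda_t$, tacitly assumes the remainder $v$ is positive; your explicit use of the face property of the extremal ray for the first assertion, and your remark that $g_{i_0}+n_1'g_1$ lands in $B_1$ when $n_1'=0$, patch these small omissions.
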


\begin{proof}
Clearly $\FraF'\cap \tau = \langle s_1,\ldots ,s_t \rangle$.

On the other hand, let $g\in \FraF \setminus \tau.$ There exist $\mu _1,\ldots ,\mu _p\in \N$ with $\sum _{i=2}^p \mu _i \neq 0,$ such that  $g=\sum _{i=1}^p \mu _i g_i.$ Without lost of generality we can assume that $\mu _2>1.$ There are three possibilities:
\begin{itemize}
\item If $\mu _1=0,$ then it is trivial that $g\in \FraF'\setminus \tau.$
\item If $ \lambda_t > \mu _1 >0,$ then $g=\underbrace{g_2+\mu _1g_1}_{\in B_2}+ (\mu _2-1)\underbrace{g_2}_{\in B_1} +\sum _{i=3}^p \mu _i \underbrace{g_i}_{\in B_1}.$
\item If $\mu _1\ge \lambda_t>0,$ then there exist $u,v\in \N$ such that $\mu _1=u\lambda _t + v,$ with $\lambda _t>v.$ Thus,  $g= u\underbrace{(\lambda _t g_1)}_{\in B_1}+\underbrace{g_2+v g_1}_{\in B_2}+ (\mu _2-1)\underbrace{g_2}_{\in B_1} +\sum _{i=3}^p \mu _i \underbrace{g_i}_{\in B_1}.$
\end{itemize}

In any of the above cases we obtain that $g\in \FraF'\setminus \tau$ and we can conclude that $\FraF'\setminus \tau = \FraF \setminus \tau$ (trivially $\FraF'\setminus \tau \subset  \FraF \setminus \tau$).
\end{proof}

\begin{lemma}\label{quitando elementos}
Let $\FraF\subset \N ^2$ be a finitely generated semigroup and $a\in \FraF.$ The set $\FraF\setminus \{a\}$ is a semigroup if and only if  $a$ is a minimal generator of  $\FraF.$ Besides if $B=\{a,f_2,\ldots ,f_t\}$ is the minimal system of generators of  $\FraF,$ then the semigroup  $\FraF\setminus \{a\}$ is generated by
$$\left\{ f_2,\ldots ,f_t, f_2+a,\ldots ,f_t+a, 2a, 3a \right\}.$$
\end{lemma}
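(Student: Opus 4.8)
The plan is to prove the two directions of the equivalence separately and then verify the explicit generating set. For the easy direction, suppose $a$ is not a minimal generator of $\FraF$; then $a = x + y$ for some nonzero $x,y \in \FraF$, and since $a$ is an element of the semigroup we also have $a + a \in \FraF \setminus \{a\}$ (note $2a \neq a$ because $a \neq 0$ lies in $\N^2$ and $\FraF \setminus \tau$-type arguments are not even needed here). Writing $2a = (x+y) + a$ shows $x+y = a \notin \FraF\setminus\{a\}$ can still be reconstructed: more carefully, $a = x+y$ with $x,y \in \FraF\setminus\{0\}$; if both $x,y \neq a$ then $a \notin \FraF \setminus\{a\}$ is witnessed as a sum of two elements of $\FraF\setminus\{a\}$, so the latter is not closed under addition; if, say, $x = a$, then $y = 0$, contradiction. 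Hence when $a$ is not a minimal generator, $\FraF \setminus \{a\}$ fails to be a semigroup. Conversely, suppose $a$ is a minimal generator, and let $b, c \in \FraF \setminus \{a\}$; I must show $b + c \neq a$. If $b + c = a$, then since $b, c \in \FraF$ and both are nonzero (as $a \neq 0$), $a$ would be expressible as a sum of two nonzero elements of $\FraF$, contradicting minimality of the generator $a$. Thus $b+c \in \FraF \setminus \{a\}$, and $\FraF\setminus\{a\}$ is a subsemigroup of $\N^2$.

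Next I turn to the explicit system of generators. Let $B = \{a, f_2, \ldots, f_t\}$ be the minimal system of generators of $\FraF$ and set $G = \{f_2, \ldots, f_t, f_2 + a, \ldots, f_t + a, 2a, 3a\}$. One inclusion is clear: every element of $G$ lies in $\FraF$ and is distinct from $a$ — the $f_i$ are distinct minimal generators, the $f_i + a$ are strictly larger (in the componentwise partial order, or in total degree) than $a$, and $2a, 3a \neq a$ since $a \neq 0$; hence $\langle G \rangle \subseteq \FraF \setminus \{a\}$. For the reverse inclusion, take $g \in \FraF \setminus \{a\}$ and write $g = \mu_1 a + \sum_{i=2}^t \mu_i f_i$ with $\mu_j \in \N$. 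I distinguish cases on $\mu_1$: if $\mu_1 = 0$ then $g \in \langle f_2, \ldots, f_t \rangle \subseteq \langle G\rangle$; if $\mu_1 \geq 2$, then $\mu_1 a \in \langle 2a, 3a\rangle$ (every integer $\geq 2$ is a nonnegative combination of $2$ and $3$), so $g \in \langle G \rangle$; the only remaining case is $\mu_1 = 1$. In that case, since $g \neq a$, not all of $\mu_2, \ldots, \mu_t$ can be zero, so pick some index $j$ with $\mu_j \geq 1$ and write $g = (f_j + a) + (\mu_j - 1) f_j + \sum_{i \neq j, i \geq 2} \mu_i f_i$, which exhibits $g$ as an element of $\langle G \rangle$ since $f_j + a \in G$ and all the remaining $f_i \in G$.

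The argument is essentially bookkeeping; the one point deserving care is the case $\mu_1 = 1$, where one must use that $g \neq a$ to guarantee the existence of a nonzero $\mu_j$ with $j \geq 2$, so that the term $a$ can be absorbed into $f_j + a$ rather than left stranded. A second subtlety worth stating explicitly is why $2a$ and $3a$ (rather than just $2a$) are needed: $\{n a : n \geq 2\} \cap \N = \langle 2a, 3a \rangle$ within the sub-semigroup $a\N$, exactly the Apéry/Frobenius fact that $2$ and $3$ generate $\{2,3,4,\ldots\}$ as a numerical semigroup together with $0$; one of $2a$ or $3a$ alone would miss the other. I expect no real obstacle beyond organizing these cases cleanly; the membership claims and the inclusion $\langle G\rangle \subseteq \FraF\setminus\{a\}$ are immediate from $a \neq 0$ and the minimality of $B$.
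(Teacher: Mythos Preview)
Your proof is correct and follows essentially the same approach as the paper's: contrapositive for one direction of the equivalence, a closure argument for the other, and the same case split on the coefficient of $a$ (zero, one, or at least two) to verify the explicit generating set. One small imprecision worth tightening: in the converse direction, the assertion that ``both [$b,c$] are nonzero (as $a\neq 0$)'' does not follow from $a\neq 0$ alone---the missing observation is that if one of them were zero the other would equal $a$, contradicting $b,c\in\FraF\setminus\{a\}$.
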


\begin{proof}
Assume that  $\FraF\setminus \{a\}$ is a semigroup and that  $a$ is not a minimal generator of $\FraF.$ Then there exist  $a_1,a_2\in \FraF\setminus
\{a\}$ such that  $a=a_1+a_2,$ which contradicts the fact that  $\FraF\setminus \{a\}$ is a semigroup.

Conversely, assume that $a$ is a minimal generator of  $\FraF$ (remind the semigroup $\FraF$ has a unique system of generators).
To prove that $\FraF\setminus \{a\}$ is a semigroup it is only necessary to show that the addition is an operation on this set. Let $x,y \in \FraF\setminus \{a\}$, then  $x+y \in \FraF\setminus \{a\}$ (if not we have that  $x+y =a$, which is impossible because $a$ is a minimal generator of $\FraF$).

Let $B=\{a,f_2,\ldots ,f_t\}$ the minimal system of generators of  $\FraF$ (without lost of generality we assume that $a$ is the first element of  $B$). Trivially, $\{f_2,\ldots ,f_t, f_2+a,\ldots ,f_t+a, 2a, 3a\}\subset \FraF\setminus \{a\}.$
Let  $f\in \FraF\setminus \{a\}\subset \FraF,$ therefore $\exists \lambda ,\lambda _2,\ldots \lambda _t \in \N$ such that
$f=\lambda a +\sum _{i=2}^t\lambda _i f_i.$ If $\lambda \neq 1,$ there exist $\alpha,\beta \in \N$ verifying that $\lambda = 2\alpha + 3 \beta,$ thus
$$f=\lambda a +\sum _{i=2}^t\lambda _i f_i=\alpha (2a)+\beta (3a) +\sum _{i=2}^t\lambda _i f_i.$$ If $\lambda = 1,$ since $a\notin \FraF\setminus \{a\},$ there exists $\lambda _{i_0}\geq 1,$ such that
$$f=a +\sum _{i=2}^t\lambda _i f_i=(f_{i_0}+a)+(\lambda _{i_0}-1)f_{i_0}+\sum _{i=2,\, i\neq i_0}^t\lambda _i f_i.$$ In any case, $\left\{ f_2,\ldots ,f_t, f_2+a,\ldots ,f_t+a, 2a, 3a \right\}$ is a system of generators of $\FraF\setminus\{a\}.$
\end{proof}

\begin{corollary}\label{semigrupo menos un numero finito de puntos}
Let  $\FraF$ be a finitely generated semigroup and $A\subset \FraF$ be a finite subset. If $\FraF\setminus A$ is a semigroup, then  $\FraF\setminus A$ is a finitely generated semigroup. Furthermore, there exists an algorithm to compute a system of generators of $\FraF\setminus A$.
\end{corollary}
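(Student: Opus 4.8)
The plan is to reduce the general statement about removing a finite subset $A=\{a_1,\dots,a_n\}$ to the single-point removal case handled by Lemma~\ref{quitando elementos}, by peeling off the points of $A$ one at a time. First I would observe that if $\FraF\setminus A$ is a semigroup, then each intermediate set obtained by removing the points of $A$ one at a time is also a semigroup; the subtlety is that this is not completely automatic, since removing one point might destroy the semigroup property of an intermediate stage even though removing all of them at once is fine. To handle this cleanly I would instead argue by induction on $|A|$ in the following form: if $\FraF\setminus A$ is a semigroup and $a\in A$, then $a$ must be a minimal generator of $\FraF$ (otherwise $a=a_1+a_2$ with $a_1,a_2\in\FraF$, and since at least one of $a_1,a_2$ lies outside $A$... — actually one must be slightly careful here, so the cleaner route is: pick $a\in A$ that is a minimal generator of $\FraF$; such an $a$ exists because if no element of $A$ were a minimal generator, then every $a\in A$ could be written as a sum of two nonzero elements of $\FraF$, and a minimality/degree argument on the smallest element of $A$ in a suitable grading shows its summands avoid $A$, contradicting that $\FraF\setminus A$ is closed under addition).

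Once such a minimal generator $a\in A$ is fixed, by Lemma~\ref{quitando elementos} the set $\FraF_1:=\FraF\setminus\{a\}$ is a finitely generated semigroup, and that lemma gives an explicit finite generating set for it, namely $\{f_2,\dots,f_t,f_2+a,\dots,f_t+a,2a,3a\}$ where $\{a,f_2,\dots,f_t\}$ is the minimal generating system of $\FraF$ (which is computable since $\FraF$ is affine). Now $\FraF\setminus A=\FraF_1\setminus(A\setminus\{a\})$, and $A\setminus\{a\}$ is a finite subset of $\FraF_1$ of size $|A|-1$, with $\FraF_1\setminus(A\setminus\{a\})=\FraF\setminus A$ a semigroup by hypothesis. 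By the induction hypothesis applied to $\FraF_1$, the set $\FraF_1\setminus(A\setminus\{a\})$ is finitely generated and a generating set for it is computable. The base case $|A|=0$ is trivial, and the base case $|A|=1$ is exactly Lemma~\ref{quitando elementos}.

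The main obstacle, and the one point that deserves a careful argument rather than a one-line dismissal, is establishing at each inductive step that the finite set being removed actually consists of (or at least contains) minimal generators, so that Lemma~\ref{quitando elementos} applies. The clean way to see this: since $\FraF\subset\N^2$ is pointed, fix any linear functional $\varphi$ that is strictly positive on $\FraF\setminus\{0\}$, and among the finitely many elements of $A$ choose one, $a$, minimizing $\varphi$. If $a$ were not a minimal generator of $\FraF$, write $a=a_1+a_2$ with $a_1,a_2\in\FraF\setminus\{0\}$; then $\varphi(a_1),\varphi(a_2)<\varphi(a)$, so $a_1,a_2\notin A$ by minimality, hence $a_1,a_2\in\FraF\setminus A$; but then $a=a_1+a_2\in\FraF\setminus A$ since that set is a semigroup, contradicting $a\in A$. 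Therefore $a$ is a minimal generator of $\FraF$, Lemma~\ref{quitando elementos} applies, and the induction goes through; the algorithm is obtained by unwinding this recursion, at each stage computing a minimal generating system (from the finite generating set produced by the previous stage, using the standard algorithm for affine semigroups) and applying the explicit formula of Lemma~\ref{quitando elementos}.
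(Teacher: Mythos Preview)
Your proof is correct and follows essentially the same approach as the paper: iteratively remove from $A$ an element that is a minimal generator of the current semigroup, applying Lemma~\ref{quitando elementos} at each step, and recurse on the smaller set. Your linear-functional argument that $A$ must contain a minimal generator of $\FraF$ is more explicit than the paper's (which just cites ``the proof of Lemma~\ref{quitando elementos}''); an even shorter justification is available---if $A$ contained no minimal generator, the whole minimal generating set $B$ would lie in the semigroup $\FraF\setminus A$, forcing $\FraF\setminus A=\FraF$.
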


\begin{proof}
Assume that $A=\{a_1,\ldots ,a_n\}\subset \FraF$ and assume that $B$ is the minimal system of generators of $\FraF$.
Using the proof of Lemma  \ref{quitando elementos}, at least an element of $A$ must be an element of $B$.
Assume that  $a_1\in B$, then by Lemma \ref{quitando elementos} we obtain that $\FraF_1=\FraF\setminus \{a_1\}$ is a subsemigroup of $\N ^2.$ Denote by $B_1$ to the minimal system of generators of the semigroup $\FraF_1$ which is obtained from the system of generators of $\FraF_1$ constructed as in Lemma \ref{quitando elementos}. Using again the above reasoning with the sets $A_1=A\setminus \{a_1\},$ $\FraF_1$ and $B_1,$
we obtain a new semigroup $\FraF_2=\FraF_1\setminus \{a_i\},$ where $a_i\in A_1\cap B_1$ with $i\in \{2,\ldots,n\}$.
Since $A$ is finite, this method stops after a finite number of steps and we obtain a finite system of generators $B_n$ of the semigroup $\FraF_n=\FraF\setminus A.$
\end{proof}

\section{Convex polygonal semigroups}\label{s4}
In general the semigroup generated by a convex body of $\R^2$ is not finitely generated. In this section partial results on semigroups generated by convex polygons are presented and the affine convex polygonal semigroups are characterized.

Denote by $P_i=(p_{i1},p_{i2})$ with $i=1,\ldots ,n$  the vertices of a compact convex polygon $F\subset \R^2_{\geq}$ ordered in the clockwise direction. We denote this set by  $\CaP$ and by $\FraP$ the associated semigroup.

\begin{proposition}\label{poligono_racional}
If $\CaP \subset \Q ^2_{\geq},$ then $\FraP$ is finitely generated. Furthermore, there exists an algorithm which determines its minimal system of generators.
\end{proposition}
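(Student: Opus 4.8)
The plan is to adapt, one dimension higher, the homogenization argument used for segments in the proof of Lemma~\ref{semigrupo_segmento_racional}. Writing $P_i=(p_{i1},p_{i2})$, I would lift each vertex to $P_i'=(p_{i1},p_{i2},1)\in\Q^3$, put $C'=L_{\Q_{\geq}}(\{P_1',\dots,P_n'\})\subset\R^3$ and $\FraC'=C'\cap\N^3$. Since the $P_i$ are rational and $F$ has non-empty interior, $C'$ is a full-dimensional rational polyhedral cone, so it is cut out by a finite system of rational linear inequalities and $\FraC'$ is exactly the set of non-negative integer solutions of that system. By the fact recalled before Lemma~\ref{lema_rectangulo}, $\FraC'$ is then a finitely generated affine semigroup whose minimal system of generators can be computed by solving the associated system of Diophantine equations (see \cite{Evelyne2,Nsol}).

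The key point is the identity $\FraP=\rho(\FraC')$, where $\rho\colon\Z^3\to\Z^2$ denotes the projection onto the first two coordinates. I would verify it by unwinding both sides: a point $(x,y,z)\in\N^3$ belongs to $C'$ iff $(x,y,z)=\sum_k\mu_kP_k'$ with all $\mu_k\in\Q_{\geq}$, and then $z=\sum_k\mu_k$; when $z>0$ one has $(x,y)/z=\sum_k(\mu_k/z)P_k\in F$, so $(x,y)\in zF$ with $z\in\N$, i.e. $(x,y)\in\FraP$. Conversely, if $(x,y)\in\FraP\setminus\{0\}$ then $(x,y)/i\in F$ for some $i\in\N$, hence $(x,y)/i=\sum_k\lambda_kP_k$ is a convex combination of the vertices and $(x,y,i)=\sum_k(i\lambda_k)P_k'$ lies in $C'\cap\N^3$ and projects onto $(x,y)$; finally $0=\rho(0,0,0)$.

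To conclude, I would use that $\rho$ restricts to a semigroup morphism $\N^3\to\N^2$: if $\{g_1',\dots,g_m'\}$ generates $\FraC'$, then $\{\rho(g_1'),\dots,\rho(g_m')\}$ generates $\rho(\FraC')=\FraP$, so $\FraP$ is finitely generated, and its minimal system of generators is obtained from this finite set by repeatedly deleting any element lying in the semigroup generated by the others (this is effective, since membership in a finitely generated affine semigroup is decidable). Collecting the steps yields the algorithm: compute a facet description of $C'$, solve the corresponding Diophantine system to get the generators of $\FraC'$, project them with $\rho$, and prune the redundant ones.

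The step I expect to be the main obstacle is proving $\FraP=\rho(\FraC')$ cleanly, i.e. checking that the homogenizing third coordinate of a point of $\FraC'$ is exactly an admissible dilation factor $i$ with $(x,y)\in iF$, and conversely; once that is settled, finite generation and the algorithm follow purely from the already-invoked fact that the lattice points of a rational polyhedral cone form an effectively computable finitely generated affine semigroup.
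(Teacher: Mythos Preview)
Your proposal is correct and follows essentially the same homogenization argument as the paper: lift the rational vertices $P_i$ to $(P_i,1)\in\Q^3$, take the rational polyhedral cone they generate, compute the finitely many minimal lattice points of that cone from its defining rational inequalities, and project onto the first two coordinates to obtain a generating set of $\FraP$. Your write-up is in fact more careful than the paper's, since you spell out and verify the identity $\FraP=\rho(\FraC')$ that the paper states without justification.
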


\begin{proof}
Let $\CaP=\{P_1,\ldots ,P_n\}$ the set of vertices of $F$ and consider the set of points $\CaP'=\{(P_1,1),\ldots ,(P_n,1)\}\subset \Q_{\geq} ^3.$ Take now the cone $\FraC \subseteq \N ^3$ delimited by the planes that contain the origin and two consecutive points of $\CaP'.$ Since this cone is defined by rational inequalities, it is finitely generated.

A system of generators of $\FraP$ is the set formed by the projection onto the first two coordinates of a system of generators of  $\FraC.$ From this set of generators of $\FraP$ one can compute its minimal system of generators.
\end{proof}

Suppose now the extremal ray $\tau_1$ of $L_{\Q_{\geq}}(F)$ intersects $F$ in only one point $P_1,$ denote by $V_i$ the intersection of $\overline{(iP_1)(iP_2)}$ and  $\overline{((i+1)P_n)((i+1)P_1)}$ for every $i\in \N.$ Note that for the initial values of $i$ it is possible that these points does not exist (see Figure \ref{vertice_triangulo}).

\begin{lemma}\label{distancia_triangulos}
Every point $V_i$ belongs to a   parallel line to  $\tau _1.$
\end{lemma}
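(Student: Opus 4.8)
The plan is to exploit the self-similar (dilation) structure of the picture. The point $V_i$ is defined as the intersection of the segment $\overline{(iP_1)(iP_2)}$ — which lies on the line through $O$ and $P_1$ only at its endpoint $iP_1$, but otherwise is the $i$-th dilate of the edge $\overline{P_1P_2}$ — with the segment $\overline{((i+1)P_n)((i+1)P_1)}$, the $(i+1)$-th dilate of the edge $\overline{P_nP_1}$. First I would fix affine parametrizations: write the line supporting $\overline{P_1P_2}$ as $P_1 + t\,\overrightarrow{P_1P_2}$, so the line supporting its $i$-th dilate is $iP_1 + t\,\overrightarrow{P_1P_2}$ (same direction vector, only the base point scales by $i$); similarly the line supporting the $(i+1)$-th dilate of $\overline{P_nP_1}$ is $(i+1)P_1 + u\,\overrightarrow{P_nP_1}$.

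**Solving for $V_i$.** Intersecting these two lines means solving
\begin{equation*}
iP_1 + t\,\overrightarrow{P_1P_2} = (i+1)P_1 + u\,\overrightarrow{P_nP_1},
\end{equation*}
i.e. $t\,\overrightarrow{P_1P_2} - u\,\overrightarrow{P_nP_1} = P_1$. Since $P_1$ is a genuine vertex where $\tau_1$ meets $F$ in a single point, the two edge directions $\overrightarrow{P_1P_2}$ and $\overrightarrow{P_nP_1}$ are linearly independent, so this $2\times 2$ system has a unique solution $(t_0,u_0)$, and crucially $t_0,u_0$ do not depend on $i$ — the $i$ dropped out of the equation entirely. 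Therefore $V_i = iP_1 + t_0\,\overrightarrow{P_1P_2}$. I would then compute $V_{i+1}-V_i = P_1 = \overrightarrow{OP_1}$, which is a direction vector of $\tau_1$. Hence consecutive $V_i$ differ by a fixed vector parallel to $\tau_1$, so all the $V_i$ lie on a single line parallel to $\tau_1$ (namely the line through $V_1$, or equivalently through $t_0\,\overrightarrow{P_1P_2}$, with direction $\overrightarrow{OP_1}$).

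**Main obstacle.** The computation itself is routine linear algebra; the one thing that needs care is the degenerate-index issue flagged in the statement (``for the initial values of $i$ it is possible that these points do not exist'', cf. Figure \ref{vertice_triangulo}). The existence of $V_i$ as a point of the two \emph{segments} (not just the supporting lines) requires $t_0 \in [0,1]$ along the scaled edge $\overline{(iP_1)(iP_2)}$ and $u_0$ in the appropriate range along $\overline{((i+1)P_n)((i+1)P_1)}$; since $t_0,u_0$ are independent of $i$, this is either true for all $i$ or for none, so in the regime where the $V_i$ are defined at all they are defined for every $i$, and the argument above applies uniformly. I would state the lemma as a claim about exactly those $V_i$ that exist, and note that the affine-independence of the edge directions at the vertex $P_1$ is what makes the linear system solvable with an $i$-free solution — that independence is precisely the hypothesis that $\tau_1\cap F$ is the single point $P_1$. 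The proof then reduces to: set up the two line equations, observe $i$ cancels, read off $V_{i+1}-V_i = \overrightarrow{OP_1} \parallel \tau_1$.
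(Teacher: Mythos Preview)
Your core argument is correct and is in fact cleaner than the paper's. The paper computes the Euclidean distance from $V_i$ to $\tau_1$ explicitly in coordinates and observes that the resulting expression is independent of $i$; you instead exploit the dilation structure to show that the defining linear system $t\,\overrightarrow{P_1P_2}-u\,\overrightarrow{P_nP_1}=P_1$ is $i$-free, whence $V_{i+1}-V_i=P_1\parallel\tau_1$. Both routes prove the lemma; yours is more conceptual and sidesteps the opaque distance formula the paper displays.

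There is, however, an error in your ``main obstacle'' paragraph. In the parametrization $iP_1+t\,\overrightarrow{P_1P_2}$, the segment $\overline{(iP_1)(iP_2)}$ corresponds to $t\in[0,i]$, not $t\in[0,1]$: the endpoint $iP_2=iP_1+i\,\overrightarrow{P_1P_2}$ is reached at $t=i$. (Dilation by $i$ rescales the parameter range, not just the base point.) Hence the condition for $V_i$ to lie on the actual segment is $0\le t_0\le i$, which \emph{does} depend on $i$, and your all-or-none dichotomy is false. The correct conclusion, matching the paper's remark and Figure~\ref{vertice_triangulo}, is that $V_i$ exists on the segments for all sufficiently large $i$ (namely once $i\ge t_0$ and the analogous bound from the other segment). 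This slip does not touch the proof of the lemma itself, which only concerns those $V_i$ that exist, but you should correct the discussion. One further small point: the linear independence of $\overrightarrow{P_1P_2}$ and $\overrightarrow{P_nP_1}$ is automatic at any vertex of a convex polygon; it is not ``precisely'' the hypothesis that $\tau_1\cap F=\{P_1\}$, though that hypothesis is what makes the construction of the $V_i$ meaningful.
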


\begin{proof}
Clearly $\overline{(iP_1)(iP_2)}$ and  $\overline{((i+1)P_n)((i+1)P_1)}$ are not parallel, their lengths
increase with no limit and keep one of their vertices in the ray $\tau_1$. They intersect in only one point $V_i$ for $i\gg 0.$

After some basic computations the reader can check that the distance between  $V_i$ and $\tau_1$ is constant and equal to  $$ \left|{\frac {{p_{12}}^{2}p_{21}p_{n1}-p_{12}p_{21}p_{11}p_{n2}+{a_{
{1}}}^{2}p_{n2}p_{22}-p_{11}p_{22}p_{12}p_{n1}}{ \left( -p_{22}
p_{n1}+p_{11}p_{22}+p_{12}p_{n1}+p_{n2}p_{21}-p_{n2}p_{11}-b_
{{1}}p_{21} \right) \sqrt {{p_{12}}^{2}+{p_{11}}^{2}}}}\right|.$$
Thus, the points  $V_i$ are in a  line parallel to $\tau _1.$
\end{proof}

In this case there exists $i_0$ such that $$\int(\FraP) \setminus \cup_{i\geq 0}^{i_0}F_i \subset\int (\FraC) \setminus \cup_{i\geq i_0}{\rm{triangle}}({\{iP_1,(i+1)P_1,V_i\}}).$$
We illustrate this property in Figure \ref{vertice_triangulo} (in this figure $i_0=6$).
        \begin{figure}[h]
            \begin{center}
\includegraphics[scale=0.27]{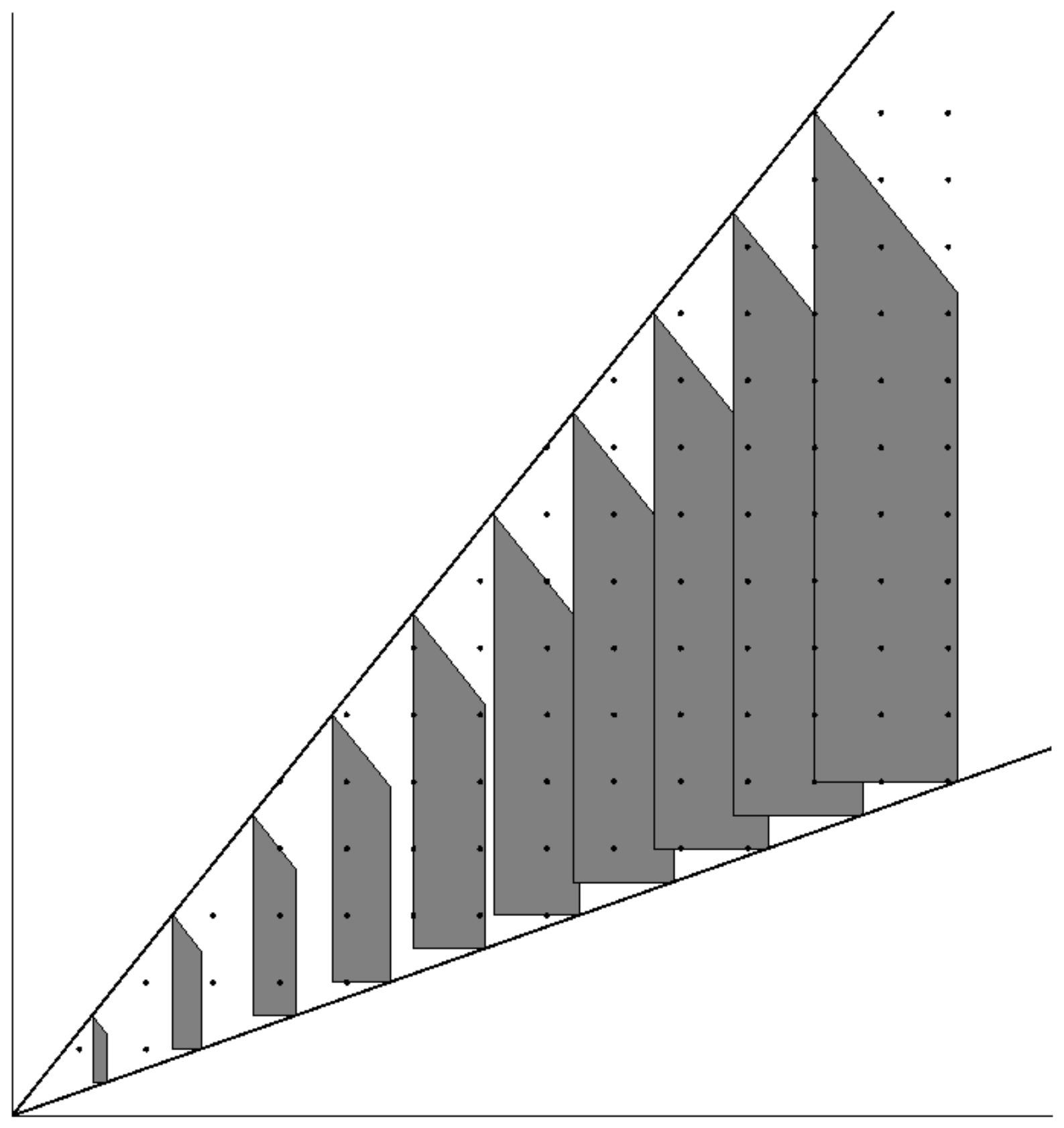}
            \caption{Image of a convex polygonal semigroup.}\label{vertice_triangulo}
            \end{center}
        \end{figure}

For the sake of simplicity we have used the points $P_1,$ $P_2$ and $P_n$ in the above results, but the result can be extended to the intersection of $F$ and an extremal ray when this intersection is only a point.

We focus now our attention when $F$ is a particular triangle.

\begin{proposition}\label{semigrupo_triangulo}
Let $F$ be a triangle delimited by $\{P_1,P_2,P_3\}$ with $P_1\in \Q^2_{\geq}$ and $P_2,P_3\in \R^2_{\geq} \setminus \Q^2,$ such that $P_1\in \tau_1$ and $\overline{P_2P_3}\subset \tau_2$, where $\tau_1$ and $\tau_2$ are the extremal rays of $L_{\Q_{\geq}}(F)$. Then $\FraP$ is finitely generated and there exists an algorithm to compute its minimal system of generators.
\end{proposition}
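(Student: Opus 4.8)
The plan is to cut $\FraP$ along the boundary of the cone $L_{\Q_{\geq}}(F)$ into three pieces — its trace on $\tau_1$, its trace on $\tau_2$, and its interior $\int(\FraP)$ — treat each with the tools already developed, and glue the descriptions back together in an effective way. On $\tau_1$ we have $\tau_1\cap F=\{P_1\}$ with $P_1\in\Q^2_{\geq}$, so $\tau_1\cap\FraP=\{iP_1\mid i\in\N\}\cap\N^2=\langle mP_1\rangle$, where $m$ is the least positive integer with $mP_1\in\N^2$; this is a single, explicitly computable generator. On $\tau_2$ we have $\tau_2\cap F=\overline{P_2P_3}$ with $P_2,P_3\in\R^2\setminus\Q^2$, hence $\tau_2\cap\FraP=\big(\bigcup_{i\in\N}i\overline{P_2P_3}\big)\cap\N^2$, which by Lemma \ref{generadores_rayos} is finitely generated together with an algorithm producing its minimal system $\{s_1,\dots,s_t\}$. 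Both of these steps are effective.

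For the interior the decisive structural input is Lemma \ref{distancia_triangulos}: the points $V_i$ lie on a single line $\ell$ parallel to $\tau_1$, so the cone $L_{\Q_{\geq}}(F)$ splits into a strip $S$ of bounded width between $\tau_1$ and $\ell$, and the region $S'$ lying on the $\tau_2$-side of $\ell$. Beyond $\ell$ the dilates $F_i$ overlap for all $i$ large enough (their segments on $\tau_2$ overlap once $i$ is sufficiently large, and consecutive triangles meet past $V_i$), so $\int(\FraP)\cap S'$ coincides with $\int(\FraC)\cap S'$ up to at most finitely many points near the origin, where $\FraC=L_{\Q_{\geq}}(F)\cap\N^2$; since $\tau_1$ is rational ($P_1\in\Q^2$) and $\tau_2$ is rational, $\FraC$ is a finitely generated cone semigroup and hence so is this piece. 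Inside $S$, the relevant part of each $F_i$ is, up to translation by $(i-1)P_1$, a fixed parallelogram, so Lemma \ref{lema_rectangulo} shows that the lattice points of $\FraP$ in $S$ are eventually periodic along $\tau_1$ with period $mP_1$, and are therefore captured by $\langle mP_1\rangle$ together with finitely many explicitly computable translates.

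Finally I would reassemble. Starting from a finite generating set of the ``bulk plus strip'' description of $\int(\FraP)$ — a rational cone semigroup that has been modified $mP_1$-periodically near $\tau_1$ and by finitely many points near the origin — I would use Lemma \ref{sg_S_prima2} to force the traces on the extremal rays to become exactly $\langle mP_1\rangle$ and $\langle s_1,\dots,s_t\rangle$, and Corollary \ref{semigrupo menos un numero finito de puntos} to delete the finitely many spurious elements that the cone description contributes but that lie in no $F_i$. Each operation is constructive, which yields the claimed algorithm. The main obstacle is precisely the behaviour near the vertex $P_1\in\tau_1$: because $F$ meets $\tau_1$ in only one point, the dilates $F_i$ leave the infinite family of triangles $\mathrm{triangle}(\{iP_1,(i+1)P_1,V_i\})$ uncovered, so $\int(\FraP)$ is a finitely generated cone semigroup with an infinite but $mP_1$-periodic set of lattice points removed; proving that such a removal preserves finite generation, and carrying it out effectively by combining the bounded-width statement of Lemma \ref{distancia_triangulos}, the periodicity of Lemma \ref{lema_rectangulo}, and the extremal-ray surgery of Lemma \ref{sg_S_prima2}, is the technical heart of the argument.
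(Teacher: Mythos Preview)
Your decomposition into the two rays and the interior is sound, and your identification of the periodic structure in the strip $S$ via Lemma~\ref{distancia_triangulos} and Lemma~\ref{lema_rectangulo} matches the paper's preliminary analysis. But the approach then diverges, and the step you yourself flag as ``the technical heart'' is a genuine gap: you need to show that removing an \emph{infinite} $mP_1$-periodic family of lattice points from $\FraC$ leaves a finitely generated semigroup, yet Corollary~\ref{semigrupo menos un numero finito de puntos} only removes finite sets and Lemma~\ref{sg_S_prima2} only adjusts the extremal rays. Neither tool, nor their combination, handles an infinite excision from the interior, and you give no argument that such an excision preserves finite generation or how to carry it out effectively.

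The paper avoids this obstacle by a different and cleaner idea: it \emph{rationalises the triangle}. The periodicity you observe means that the finite set $T$ of integer points lying in one fundamental parallelogram $G$ but outside $\FraP$ has strictly positive minimum distances $d_1,d_2$ to the families of edges $i\overline{P_1P_2}$ and $(i+1)\overline{P_1P_3}$, and these distances are the same in every translate of $G$. Hence one may slide the irrational vertices $P_2,P_3$ along $\tau_2$ to nearby rational points $P_2',P_3'$ without any integer point crossing an edge in any dilate, obtaining a triangle $F'$ with rational vertices such that $\big(\bigcup_{i\in\N} iF'\big)\cap\N^2=\FraP$. Proposition~\ref{poligono_racional} then applies directly and yields both finite generation and the algorithm. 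This reduction-to-rational-vertices trick is the key move you are missing; it replaces your infinite-surgery problem with a one-step invocation of an earlier result.
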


\begin{proof}
By Lemma \ref{distancia_triangulos}, for all integer $i\gg 0$ the distance between the point $i\overline{P_1P_2}\cap (i+1)\overline{P_1P_3}$ and the line $\tau_1$ is constant. Let $j_0$ be the smallest integer such that $j_0\overline{P_1P_2}\cap (j_0+1)\overline{P_1P_3}\neq \emptyset$ and $j_0P_1\in \N ^2.$ Denote by $s_1$ the element of $\FraP$ which generates $\FraP\cap \tau_1,$ by $V$ the point $j_0\overline{P_1P_2}\cap (j_0+1)\overline{P_1P_3},$ and let $j_1$ be the smallest integer such that $j_1P_1=j_0P_1+s_1.$

Denote by $T_1$ the finite set of integer points belonging to the parallelogram $G$ with edges the segment $\overline{(j_0P_1)(j_1P_1)}$ and the segment determined by  the points $j_0P_1$ and $j_0P_1+\overrightarrow{(j_0P_1)V}$, but they are not in $\FraP$. By Lemma \ref{lema_rectangulo},
the integer points of $G$ obtained applying the translations defined by  $is_1$ with $i\in \N$ are
the translated of  $T_1$. Furthermore, we clearly have the distances of the points of $T_1+is_1$ to the edges of the triangles contained in the parallelogram $G+is_1$ are constant for all $i\in \N.$

Denote by $T_2$ the finite set of integer points of the region delimited by $\tau_1,$ $\tau_2$ and $j_0\overline{P_1P_3}$ which does not belong to $\FraP,$ and let $T=T_1\cup T_2$(see Figure \ref{triangulo1}).
        \begin{figure}[h]
            \begin{center}
\includegraphics[scale=0.25]{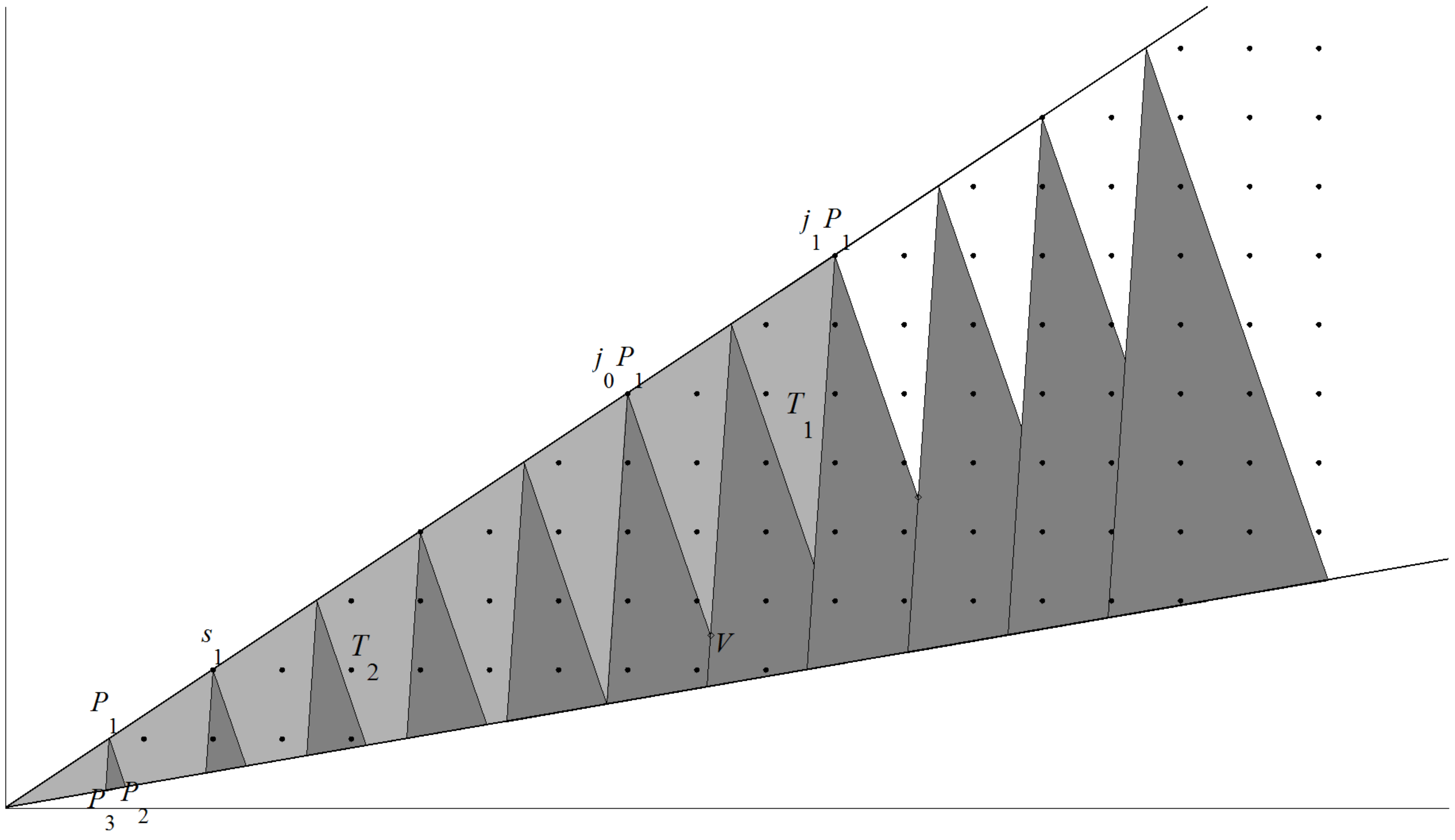}
            \caption{Set $T=T_1\cup T_2.$}\label{triangulo1}
            \end{center}
        \end{figure}

Consider $$d_1=\min\left( \bigcup_{i=1}^{j_1} \{d(H,i\overline{P_1P_2})|H\in T\}\right),$$ and $$d_2=\min \left(\bigcup_{i=0}^{j_1} \{d(H,(i+1)\overline{P_1P_3})|H\in T\}\right).$$

Once we know the distances $ d_1 $ and $ d_2 $ we can move in $ \tau_2 $ the vertices $ P_2 $  and $ P_3 $ until we reach two rational points $ P_2 '$ and $ P_3' $ (since the  slope of $ \tau_2 $ is rational, there are an infinite number of possibilities to take these points into segments that including $ \overline {P_2P_3} $) to form a new triangle $ F '$ with rational vertices $\{P_1, P_2', P_3 '\}$ such that 
$$\FraP=\left( \bigcup _{i\in \N} iF' \right) \cap \N^2,$$ as shown in Figure
\ref{triangulo2}, where dotted lines correspond to the new rational triangle with rational vertices.
        \begin{figure}[h]
            \begin{center}
\includegraphics[scale=0.25]{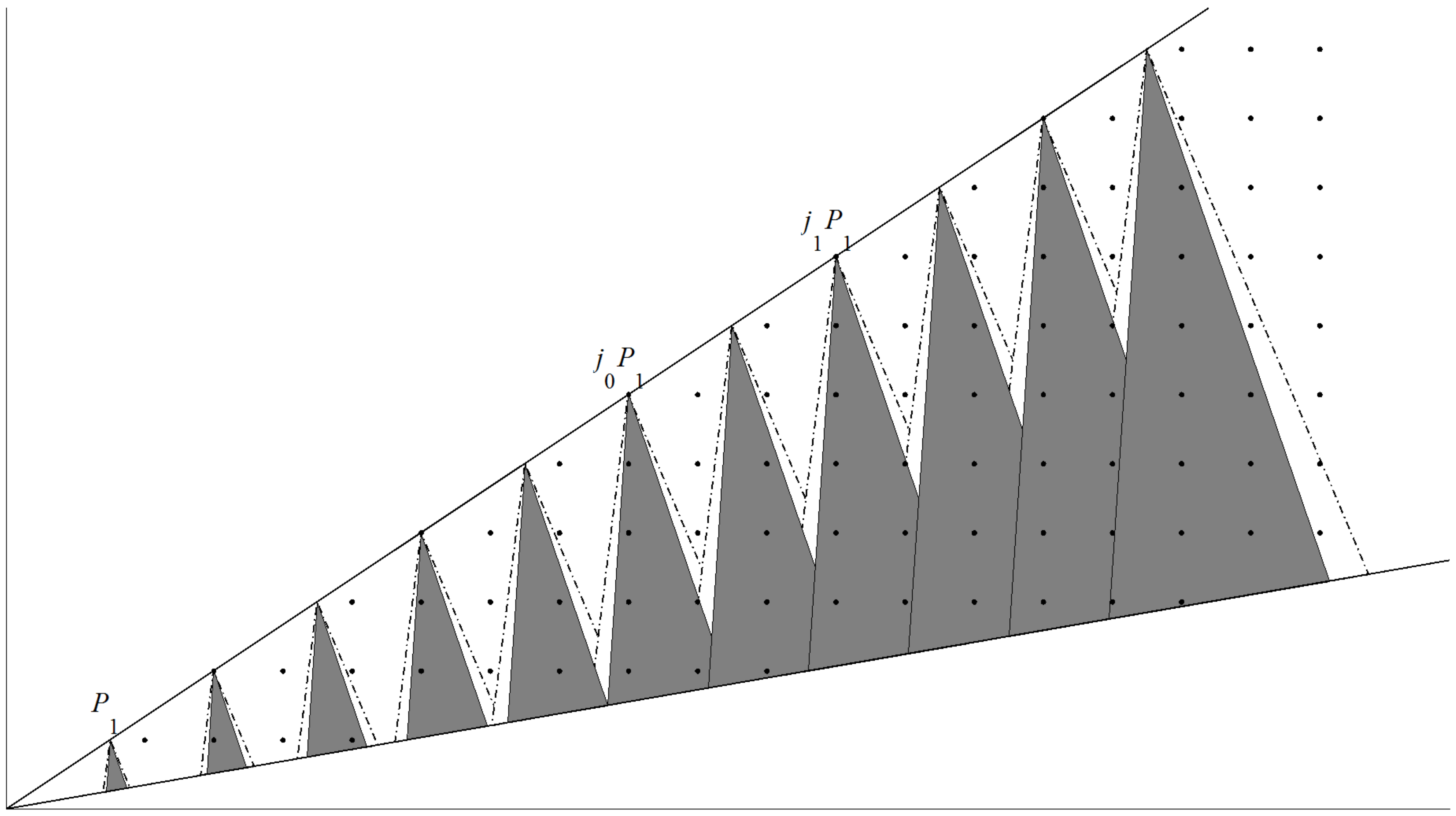}
            \caption{Construction of a triangle with rational vertices.}\label{triangulo2}
            \end{center}
        \end{figure}

As the vertices of $F'$ are rational, the semigroup $\FraP$ is finitely generated and its minimal system of generators can be computed (see Proposition \ref{poligono_racional}).
\end{proof}

\begin{proposition}\label{corte_segmentos}
Let $F\subset \R^2_{\geq}$ be a convex polygon fulfilling that   $\tau_1$ and $\tau_2$ have rational slopes and $\tau_1\cap F$ and $\tau_2\cap F$ are segments. Then $\FraP$ is finitely generated and there exists an algorithm which determines its minimal system of generators.
\end{proposition}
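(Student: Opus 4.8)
The plan is to compare $\FraP$ with the rational cone semigroup $\FraC=L_{\Q_{\geq}}(F)\cap\N^2$ and to show that they differ by only finitely many points. Since the slopes of $\tau_1$ and $\tau_2$ are rational, $\FraC$ is the set of non-negative integer solutions of a rational system of inequalities, hence (as recalled at the beginning of Section \ref{s3}) it is finitely generated and a minimal system of generators can be computed from a system of Diophantine inequalities. Because $\CaF\subseteq L_{\Q_{\geq}}(F)$ we have $\FraP\subseteq\FraC$, so $A:=\FraC\setminus\FraP=(L_{\Q_{\geq}}(F)\setminus\CaF)\cap\N^2$ satisfies $\FraC\setminus A=\FraP$, and $\FraP$ is a semigroup (it is a convex body semigroup). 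Thus, once we prove $A$ is finite, Corollary \ref{semigrupo menos un numero finito de puntos} applied to $\FraC$ and $A$ yields that $\FraP$ is finitely generated with a computable system of generators. (If $O\in F$ then $iF\supseteq F$ for all $i\geq1$ by convexity, $\CaF$ is the whole real cone, $\FraP=\FraC$, and there is nothing to prove; so assume $O\notin F$.)

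The core step is to show that $\CaF$ contains a truncated cone $\{X\in L_{\Q_{\geq}}(F)\ :\ \d(X)\geq R\}$ for a suitable computable $R$. For every ray $\tau$ of the closed cone $L_{\Q_{\geq}}(F)$, the intersection $\tau\cap F$ is a segment $\overline{P_\tau Q_\tau}$ with $\d(P_\tau)<\d(Q_\tau)$: for the two extremal rays this is the hypothesis, and for an interior ray $\tau$ it holds because $\tau$ cannot be a supporting line of $F$ (otherwise the direction of $\tau$ would be extremal in $L_{\Q_{\geq}}(F)$), so $\tau\cap F$ has positive length. As in the proof of \cite[Theorem 8]{Rosales_modular} (see Proposition \ref{ecuacion} and Corollary \ref{necesaria_y_suficiente_ecuacion}), the submonoid of $\R_{\geq}$ generated by $[\d(P_\tau),\d(Q_\tau)]$ contains a half-line $[c(\tau),\infty)$ with $c(\tau)\leq \d(P_\tau)\d(Q_\tau)/(\d(Q_\tau)-\d(P_\tau))$; hence $\tau\cap\CaF\supseteq\{X\in\tau:\d(X)\geq c(\tau)\}$. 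Parametrising the rays of $L_{\Q_{\geq}}(F)$ by their angle, which ranges over a compact interval, the endpoints $P_\tau,Q_\tau$ vary continuously along $\partial F$; therefore $\d(Q_\tau)$ is bounded above by $D:=\max_{X\in F}\d(X)$ and the chord length $\d(Q_\tau)-\d(P_\tau)$ attains a positive minimum $\varepsilon>0$. Taking $R:=D^2/\varepsilon$ we get $c(\tau)\leq R$ for every $\tau$, so indeed every point of the cone at distance $\geq R$ from $O$ lies in $\CaF$.

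It follows that $A\subseteq\{X\in L_{\Q_{\geq}}(F)\ :\ \d(X)<R\}\cap\N^2$, a finite set since it consists of the lattice points of a bounded region. Applying Corollary \ref{semigrupo menos un numero finito de puntos} finishes the proof of finite generation. The algorithm is then: compute a minimal system of generators of the rational cone semigroup $\FraC$; compute $R$ from the vertices of $F$ ($D$ and $\varepsilon$ require only finitely many elementary operations, $\varepsilon$ being the minimum of a piecewise rational function of the angle); enumerate the finitely many lattice points $X$ of $L_{\Q_{\geq}}(F)$ with $\d(X)<R$ and discard those in $\CaF$ (membership is decidable by applying the modular inequality to the chord $\tau_X\cap F$, via Corollary \ref{necesaria_y_suficiente_ecuacion} for interior points and a one-dimensional proportionally modular test on $\tau_1$ and $\tau_2$) to obtain $A$; finally run the algorithm of Corollary \ref{semigrupo menos un numero finito de puntos}.

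I expect the main obstacle to be the uniform bound in the second paragraph, namely ensuring that the chords $\tau\cap F$ do not become arbitrarily short as $\tau$ approaches an extremal ray; this is exactly where the hypothesis that $\tau_1\cap F$ and $\tau_2\cap F$ are \emph{segments} (rather than single points) is used, and without it $\CaF$ would omit points of the cone arbitrarily far from $O$ and $\FraP$ would not be affine. A secondary technical point is the continuity of $\tau\mapsto(\d(P_\tau),\d(Q_\tau))$ at the extremal rays, which relies on $\tau_1\cap F$ and $\tau_2\cap F$ being non-degenerate so that no ``jump to the empty section'' occurs.
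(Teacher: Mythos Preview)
Your proof is correct and shares the paper's overall architecture: show that $\FraC\setminus\FraP$ is finite and then invoke Corollary~\ref{semigrupo menos un numero finito de puntos}. The difference lies in how the finiteness is established and in the resulting algorithm.

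The paper argues geometrically with the dilates: it asserts (without much detail) the existence of a least integer $j_0$ such that the region $G$ bounded by $\tau_1,\tau_2$ and the segment $j_0\overline{P_1P_{l+1}}$ satisfies $\FraC\setminus G\subset\bigcup_{i\ge j_0}iF$, so that $T=G\setminus\FraP$ is the finite obstruction set. Its algorithm then treats the extremal rays separately---first computing $\FraP\cap\tau_1$ and $\FraP\cap\tau_2$ via Lemma~\ref{generadores_rayos}, then building an auxiliary semigroup $\FraF'$ agreeing with $\FraP$ on the rays and with $\FraC$ in the interior via Lemma~\ref{sg_S_prima2}, and finally deleting the interior gaps with Lemma~\ref{quitando elementos}.

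Your route is more analytic and arguably more self-contained: you bound, ray by ray, the threshold $c(\tau)\le \d(P_\tau)\d(Q_\tau)/(\d(Q_\tau)-\d(P_\tau))$ beyond which $\tau\cap\CaF$ is a half-line, and then use compactness of the angle interval together with continuity of $(P_\tau,Q_\tau)$ to obtain a uniform radius $R=D^2/\varepsilon$. This makes explicit exactly where the hypothesis ``$\tau_j\cap F$ is a segment'' enters (it guarantees $\varepsilon>0$), something the paper leaves implicit. Your algorithm is conceptually simpler---compute $\FraC$, enumerate lattice points with $\d(X)<R$, test membership in $\CaF$, and run Corollary~\ref{semigrupo menos un numero finito de puntos}---at the possible cost of a larger set $A$ to delete, since you do not shortcut the extremal-ray contributions as the paper does with Lemma~\ref{sg_S_prima2}. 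Both approaches are valid; yours gives a cleaner explanation of why the truncated cone lies in $\CaF$, while the paper's gives a more structured (and likely more efficient) computation.
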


\begin{proof}
Let $\tau_1\cap F=\overline{P_1P_2}$ and $\tau_2\cap F=\overline{P_{l+1}P_l}.$ By construction there exists the least integer $j_0,$ such that the region $G$ bounded by $\tau_1,$ $\tau_2$ and the segment $j_0\overline{P_1P_{l+1}}$ verifies $\FraC\setminus G\subset \left( \bigcup _{i\ge j_0} iF \right) \cap \N^2.$ Define the finite set $T=G\setminus\FraP.$

Since $\FraP$ is the set  $\FraC\setminus T,$ we conclude that  $\FraP$ is finitely generated  (see Corollary \ref{semigrupo menos un numero finito de puntos}).

An algorithm to determine a system of generators of  $\FraP$ is the following:
\begin{enumerate}
\item Compute the generators of $\FraP\cap \tau_1$ and $\FraP\cap \tau_2$ (use Lemma \ref{generadores_rayos}).
\item Construct a semigroup $\FraF'$ verifying $\FraF'\cap \tau_1=\FraP \cap \tau _1,$ $\FraF'\cap \tau_2=\FraP \cap \tau _2$ and $\FraF'\setminus\{\tau_1,\tau_2\}=\FraC\setminus\{\tau_1,\tau_2\}$ (use Lemma \ref{sg_S_prima2}). This semigroup is obtained using the system of generators of $\FraC$ and the generators set of the preceding step.
\item Eliminate from $\FraF'$ all the points of  $T$ (use Lemma \ref{quitando elementos}).
\end{enumerate}
This process ends after a finite number of steps obtaining  a system of generators of $\FraP$ which can used to get its minimal system of generators.
\end{proof}

\begin{theorem}\label{teorema_poligonos_fg}
The semigroup $\FraP$ is finitely generated if and only if  $F\cap\tau_1$ and $F\cap\tau_2$ contain rational points. Furthermore, in such case there exists an algorithm to compute the minimal system of generators of $\FraP.$
\end{theorem}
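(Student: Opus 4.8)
The plan is to prove the two implications separately, leaning on the partial results already established for the various configurations of how the extremal rays meet $F$.

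\medskip

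\textbf{Necessity.} First I would show that if $F\cap\tau_1$ or $F\cap\tau_2$ contains no rational point, then $\FraP$ is not finitely generated. Suppose, say, $F\cap\tau_1$ contains no rational point. If $\tau_1$ has irrational slope, then $\FraP\cap\tau_1=\emptyset$ but $\FraP$ still has points arbitrarily close to $\tau_1$; the idea is that any finite generating set lies in a cone strictly smaller than $L_{\Q_\geq}(F)$, contradicting that $\FraP$ meets every $F_i$. More carefully: if $\tau_1$ has rational slope but $F_1\cap\tau_1$ is an irrational point $P_1$, then $F_i\cap\tau_1=\{iP_1\}$ is never a lattice point, so again $\FraP\cap\tau_1=\emptyset$, yet for each $i$ the set $F_i$ contains integer points whose direction tends to that of $\tau_1$; I would argue that the minimal generators would then have to include points on infinitely many distinct rays accumulating at $\tau_1$, which is impossible for a finitely generated semigroup (a finitely generated subsemigroup of $\N^2$ has its rational cone equal to the cone spanned by its generators, and only finitely many "boundary" directions are realized). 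This is the step I expect to be the main obstacle, since it requires a clean geometric argument that finite generation forces $L_{\Q_\geq}(\FraP)$ to be spanned by actual semigroup elements on its extremal rays.

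\medskip

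\textbf{Sufficiency.} Conversely, assume $F\cap\tau_1$ and $F\cap\tau_2$ both contain rational points; I want to produce a finite generating set. The argument splits according to whether each $F\cap\tau_i$ is a single point or a segment, and whether the remaining vertices of $F$ are rational. When both intersections are segments with rational slopes, Proposition \ref{corte_segmentos} already gives the conclusion. When $F\cap\tau_1$ is a single (necessarily rational) point and $F\cap\tau_2$ is a segment, one reduces as in Proposition \ref{semigrupo_triangulo}: using Lemma \ref{distancia_triangulos} the "bad" region near $\tau_1$ is a union of translates of a fixed parallelogram, so only finitely many integer points near $\tau_1$ lie outside $\FraP$; one fattens $F$ slightly at the rational-point end and along $\tau_2$ to a polygon $F'$ with rational data having the same semigroup, then invokes Proposition \ref{poligono_racional}. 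The case where both intersections are single rational points is handled the same way at both ends. In every case one passes to a rational polygon $F'$ with $\bigl(\bigcup_{i\in\N} iF'\bigr)\cap\N^2=\FraP$, possibly after removing a finite set of points via Corollary \ref{semigrupo menos un numero finito de puntos}.

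\medskip

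\textbf{Algorithm.} Finally, I would note that each of the cited results (Lemma \ref{generadores_rayos}, Lemma \ref{sg_S_prima2}, Lemma \ref{quitando elementos}, Corollary \ref{semigrupo menos un numero finito de puntos}, Propositions \ref{poligono_racional}, \ref{semigrupo_triangulo}, \ref{corte_segmentos}) is constructive, so the reduction above is effective: one first determines $\tau_1,\tau_2$ and tests rationality of $F\cap\tau_i$ (computable since $F$ is given by finitely many vertices), then applies the appropriate proposition to output a minimal system of generators.
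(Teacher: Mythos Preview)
Your necessity argument is essentially the paper's: assuming $F\cap\tau_1$ has no rational point, one notes $\FraP\cap\tau_1=\emptyset$, takes a generator $s_k$ of maximal slope, finds a rational point $Q$ strictly between the ray of $s_k$ and $\tau_1$, and scales $Q$ into some $F_{i_0}$ to produce an element not reachable from the putative generators. So there is no obstacle there --- the argument is short and you have the right idea.

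The sufficiency side, however, diverges from the paper and contains a genuine gap. The paper does \emph{not} try to perturb $F$ into a single rational polygon. Instead, when $F\cap\tau_1$ is a single (rational) point, it chooses an auxiliary rational-slope ray $\tau_1'$ and cuts $F$ into a triangle $F_1'$ (between $\tau_1$ and $\tau_1'$) and a remainder $F_2'$; then Proposition~\ref{semigrupo_triangulo} is applied to $F_1'$ and Proposition~\ref{corte_segmentos} to $F_2'$, and $\FraP$ is the union of the two resulting semigroups, hence generated by the union of their generators. When both $F\cap\tau_i$ are single points one cuts off two triangles.

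Your version tries to run the Proposition~\ref{semigrupo_triangulo} technique on the whole polygon, and in doing so asserts that ``only finitely many integer points near $\tau_1$ lie outside $\FraP$.'' This is not correct in general: Lemma~\ref{distancia_triangulos} says the intersection points $V_i$ stay at \emph{constant} positive distance from $\tau_1$, so the gap triangles $\{iP_1,(i+1)P_1,V_i\}$ persist for all large $i$ and may well contain integer points for infinitely many $i$. In Proposition~\ref{semigrupo_triangulo} this is handled not by removing a finite set, but by sliding the two irrational vertices $P_2,P_3$ (which both lie on the rational-slope ray $\tau_2$) to nearby rational points $P_2',P_3'$ without changing the semigroup; that maneuver uses crucially that there are only two such vertices and they sit on a single rational-slope line. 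For a general polygon you have further, possibly irrational, interior vertices $P_2,\ldots,P_{n-1}$ not lying on any extremal ray, and your ``fattening $F$ to a rational polygon with the same semigroup'' does not explain how to move those. The decomposition into a triangle plus a polygon meeting both extremal rays in segments is precisely what lets the paper invoke the two already-proved propositions cleanly; without it, your last sentence (``possibly after removing a finite set of points via Corollary~\ref{semigrupo menos un numero finito de puntos}'') does not apply in the single-point cases.
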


\begin{proof}
Assume $F\cap \tau_1\subseteq \R^2_{\geq}\setminus\Q^2$ and let $G=\{s_1,s_2,\ldots, s_r\}$ be a system of generators of  $\FraP.$ This implies that $\FraP\cap \tau_1=\emptyset.$

Consider $s_k\in G$ such that the vector $\overrightarrow{Os_k}$ has maximum slope respect to the points of $G$. Since $\FraP \cap\tau_1=\emptyset$, there exists at least an element $Q\in\Q^2$ in the interior of the cone delimited by $\tau_1$ and the ray defined by $s_k$.

There exists $u\in \N$ such that  $uQ$ belongs to a polygon $F_{i_0}$, but  $uQ$ is not generated by $G.$ Thus, $\FraP$ is not finitely generated which is a contradiction.

If $F\cap \tau_2$ has not rational points, the proof that $\FraP$ is not finitely generated is similar than above.

Conversely, assume the intersections of $F$ with $\tau_1$ and $\tau_2$ contain rational points. There are several cases:
\begin{enumerate}
\item If $\tau_1\cap F$ and $\tau_2\cap F$ are segments, this case is already solved in Proposition \ref{corte_segmentos}.
\item If $\tau_1\cap F$  has only a point and $\tau_2\cap F$ is a segment, then take  $\tau_1'$ a ray  with rational slope such that the intersection of the polygon $F$ with the region delimited by $\tau_1$ and $\tau_1'$ is a triangle $F_1'.$ The set $F_2'=F\setminus F_1'$ verifies the conditions of Proposition \ref{corte_segmentos}.

    The minimal system of generators of the semigroup generated by  $F_1'$ can be computed  (use Proposition \ref{semigrupo_triangulo}).
Analogously, the minimal system of generators of the semigroup generated by $F_2'$ can be computed (use Proposition  \ref{corte_segmentos}). Since  $\FraP$ is the union of the semigroups generated by  $F_1'$ and $F_2',$ the semigroup $\FraP$ is finitely generated by the union of the above systems of generators.
\item If $\tau_1\cap F$ and $\tau_2\cap F$ are two points, we proceed as follows. Take $\tau_1'$ and $\tau_2'$ two rays with rational slopes such that the polygons obtained from the intersection of $F$ and the region delimited by  $\tau_1$ and $\tau_1',$ and by $\tau_2$ and $\tau_2',$ are two triangles. The intersection of the polygon $F$ and the region delimited by $\tau_1'$ and $\tau_2'$ verifies the condition of Proposition \ref{corte_segmentos} (see Figure \ref{poligono_cualquiera}).
        \begin{figure}[h]
            \begin{center}
\includegraphics[scale=0.2]{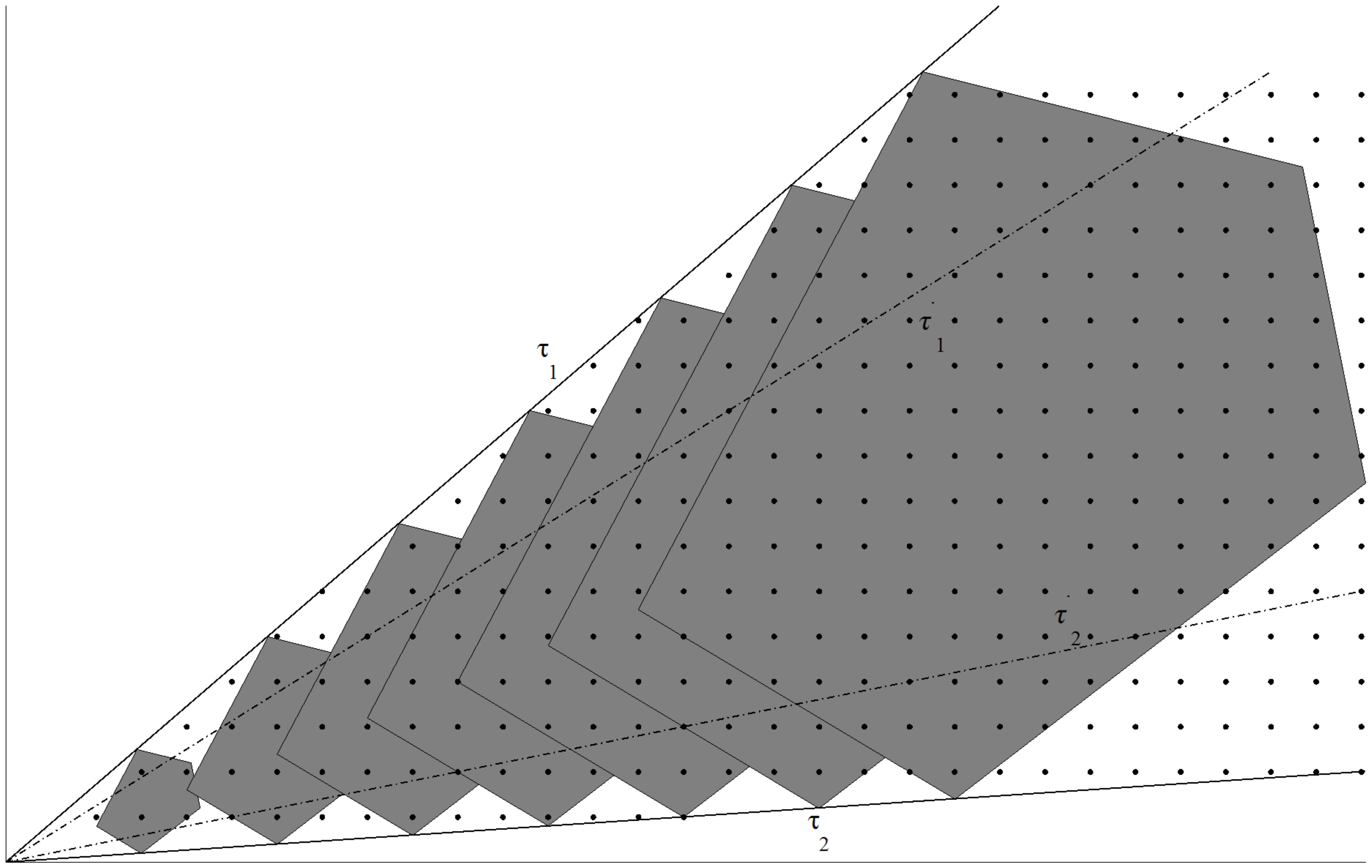}
            \caption{Polygon with only a vertex in each extremal rays.}\label{poligono_cualquiera}
            \end{center}
        \end{figure}

Once again, a system of generators of $\FraP$ can be obtained by applying Proposition \ref{semigrupo_triangulo} and Proposition \ref{corte_segmentos} to the above regions.
\end{enumerate}
In any case the semigroup $\FraP$ is finitely generated and its minimal system of generated can be computed algorithmically.
\end{proof}

\section{Circle semigroups}\label{s5}
This section is devoted to semigroups generated by circles. The reason of this section is that most of the results of Section \ref{s4} are not valid for this kind of semigroups.

Let $C$ be the circle (a convex body) with center  $(a,b)$ and radius $r.$ Denote by $C_i$ the circle with center $(ia,ib)$ and radius $ir,$ and by $\FraS=\bigcup_{i=0}^\infty C_i\cap \N^2$ the semigroup generated by $C.$ As in the preceding sections, denote by  $\tau_1$ and $\tau_2$ the extremal rays of $L_{\Q_{\geq}}(C\cap \R ^2_{\geq})$ where the slope of $\tau_1$ is greater than the slope of  $\tau_2$, and by  $\FraC$ the positive integer cone $L_{\Q_{\geq}}(C\cap \R ^2_{\geq})\cap \N^2.$ In such case, $\int(\FraC)=\FraC\setminus\{\tau_1,\tau_2\}.$

\begin{lemma}\label{limite_distancia}
Suppose that $C\cap \tau_2$ is a point. If $P_i$  is the closest point to $\tau_2$ belonging to  $C_i\cap C_{i+1}$ \footnote{For the initial values of $i$ it is possible to obtain that $C_i\cap C_{i+1}=\emptyset$, see Figure \ref{h2}.}, then $\displaystyle{\lim_{i\to \infty} \d(P_i,\tau_2)=0}.$
\end{lemma}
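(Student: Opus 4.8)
The plan is to reduce the statement to an explicit one‑variable computation by choosing convenient coordinates. First I would apply a rotation of $\R^2$ about the origin — an isometry that commutes with the homotheties $X\mapsto iX$ and preserves distances, convexity and the cone $L_{\Q_{\geq}}$ — so that $\tau_2$ becomes the non‑negative $x$‑axis. The hypothesis that $C\cap\tau_2$ is a single point means that the line $\tau_2$ is tangent to the disk $C$, and since $C\cap\R^2_{\geq}$ lies in the cone spanned by $\tau_1$ and $\tau_2$ — which after the rotation is contained in the closed upper half‑plane — the disk $C$ sits on the side $y\ge 0$ of $\tau_2$. Hence the centre of $C$ is $(c,r)$, with $r$ its radius, and $C_i$ is the disk with centre $(ic,ir)$ and radius $ir$, tangent to $\tau_2$ at $(ic,0)$. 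If $c=0$ every $C_i$ is tangent to $\tau_2$ at $O$, so $O\in C_i\cap C_{i+1}$, $P_i=O$ and there is nothing to prove; hence I would assume $c\neq 0$.

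Next I would locate the intersection points of the bounding circles $\partial C_i$ and $\partial C_{i+1}$. From $x^2+y^2-2icx-2iry+i^2c^2=0$ and the analogous equation for $i+1$, subtraction gives the radical axis $cx+ry=(2i+1)c^2/2$; substituting $x=\tfrac{(2i+1)c}{2}-\tfrac{ry}{c}$ into the first equation and simplifying (using $\tfrac{(2i+1)^2}{4}-i(i+1)=\tfrac14$) yields
\[ \frac{r^2+c^2}{c^2}\,y^2-(2i+1)r\,y+\frac{c^2}{4}=0 \]
for the $y$‑coordinates of those intersection points. Its discriminant $(2i+1)^2r^2-(r^2+c^2)$ is nonnegative exactly for $i$ large — consistently with the footnote that $C_i\cap C_{i+1}$ may be empty for small $i$ — and the two roots then satisfy $0\le y_-\le y_+$.

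It remains to identify $\d(P_i,\tau_2)$ with the smaller root $y_-$, rather than with a value attained somewhere else on the lens $C_i\cap C_{i+1}$. Since this lens is convex and compact, the minimum of the linear function $y$ over it is attained either at a smooth boundary point where the outward normal is $(0,-1)$, or at a corner of the lens; the first alternative would force that point to be the lowest point $(ic,0)$ of $C_i$ or $((i+1)c,0)$ of $C_{i+1}$, and when $c\neq 0$ neither of these lies in the other disk, hence neither lies in the lens. So the minimum is attained at a corner, i.e.\ at a point of $\partial C_i\cap\partial C_{i+1}$, and therefore $\d(P_i,\tau_2)=y_-$. Rationalising the smaller root gives
\[ \d(P_i,\tau_2)=\frac{c^2}{2\big((2i+1)r+\sqrt{(2i+1)^2r^2-(r^2+c^2)}\big)}, \]
whose right‑hand side clearly tends to $0$ as $i\to\infty$ (in fact it behaves like $c^2/(8ir)$). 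I expect the only genuinely delicate step to be this last identification — verifying that $P_i$ is indeed one of the two circle‑intersection points and not some lower point of the lens boundary; the reduction to coordinates and the algebraic simplification leading to the displayed quadratic are routine.
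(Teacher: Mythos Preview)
Your proof is correct and follows essentially the same route as the paper: rotate so that $\tau_2$ becomes the $x$-axis, recognise that tangency forces the centre to have second coordinate equal to the radius, solve for the intersection points of $\partial C_i$ and $\partial C_{i+1}$, and observe that the smaller $y$-coordinate tends to $0$. Your rationalised expression $c^2/\big(2((2i+1)r+\sqrt{(2i+1)^2r^2-(r^2+c^2)})\big)$ is algebraically the same as the paper's closed form for $h_i$; the only substantive addition in your write-up is the explicit (and welcome) verification that the minimum of $y$ on the lens $C_i\cap C_{i+1}$ is achieved at a corner rather than at a smooth boundary point, a step the paper simply takes for granted.
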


\begin{proof}
Denote by $h_i$ the distance $\d(P_i,\tau_2).$ Without lost of generality, assume that $\tau_2$ is the line $\{y=0\}.$ This is possible because the distances between the points of our construction are invariant by turn centered in the origin. Graphically the situation is as shown in Figure \ref{h2}.
        \begin{figure}[h]
            \begin{center}
\includegraphics[scale=0.4]{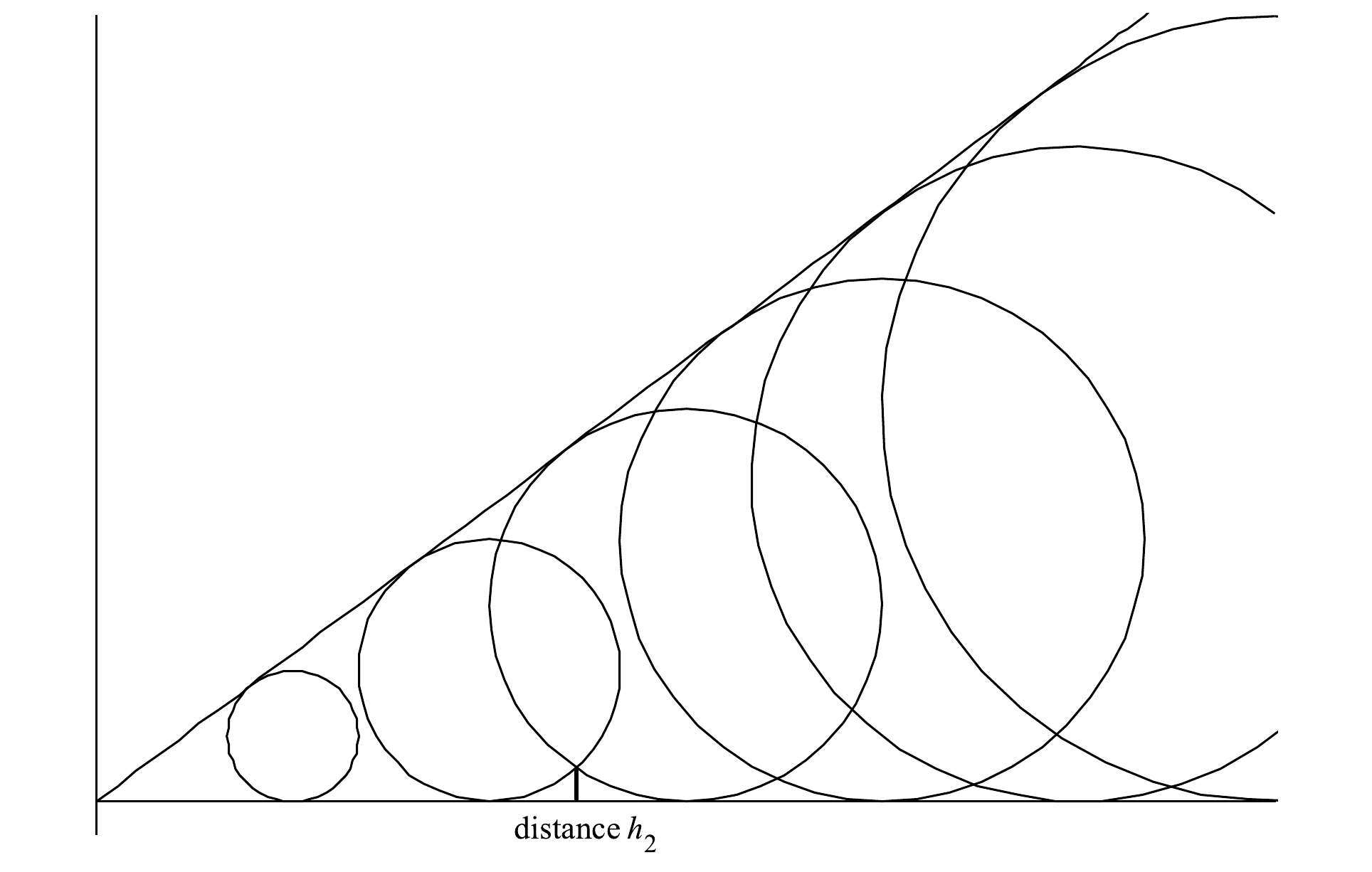}
            \caption{Distance $h_2.$}\label{h2}
            \end{center}
        \end{figure}

Since the slope of $\tau_2$ is zero, the circles have radius $ib$ and therefore  $h_i=\d(P_i,\tau_2)$ is equal to the second coordinate of $P_i.$

With these hypothesis, the point $P_i$ is the solution of the following system of equations closest to the axis $OX$
        $$\left\{\begin{array}{ccccc}
        C_i & \equiv & (x-a i)^2+(y -b i)^2 & = & (b i)^2\\ \\
        C_{i+1} & \equiv & \Big(x-a (i+1)\Big)^2+\Big(y -b (i+1)\Big)^2 & = & b^2 (i+1)^2.
        \end{array}\right.$$
That is,
$$
x=\frac{a^4 (1+2 i)+b \sqrt{-a^4 \left(a^2-4 b^2 i (1+i)\right)}}{2 a \left(a^2+b^2\right)},$$
$$y= \frac{a^2 (b+2 b i)-\sqrt{-a^4 \left(a^2-4 b^2 i (1+i)\right)}}{2 \left(a^2+b^2\right)}.
$$Then the distance is \begin{equation}\label{distancia_hi}h_i=\d(P_i,\tau_2)=\frac{a^2 b+2 a^2 b i-\sqrt{-a^6+4 a^4 b^2 i+4 a^4 b^2 i^2}}{2
\left(a^2+b^2\right)}.\end{equation}
It is straightforward to prove that $\displaystyle{\lim_{i\to \infty} h_i =0}.$
\end{proof}

\begin{remark}
If $C\cap \tau_1$ has only a point, denote by $P'_i$ the point of $C_i\cap C_{i+1}$ closest to $\tau_1.$ Using the symmetry of $\bigcup_{i= 0}^{\infty} C_i$ with respect to the line joining  the centers of the circles, we obtain that $\d(P_i',\tau_1)=\d(P_i,\tau_2).$
\end{remark}

The following Lemma asserts that  $\int(\FraC) \setminus \int (\FraS)$ has a finite number of points if $C\subset \R^2_{\geq}$.

\begin{lemma}\label{lema_interior_cono}
Let $C\subset \R^2_{\geq}$ be a circle. There exists $d\in \R _{\geq}$ such that $$\{P\in \int(\FraC) | \d(P)>d\}\subset \FraS.$$
Furthermore, $d$ can be computed algorithmically.
\end{lemma}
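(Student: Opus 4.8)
The plan is to split the open cone $L_{\Q_{\geq}}(C)\setminus\{\tau_1,\tau_2\}$ into a bulk part, bounded away from the two extremal rays, together with two thin neighbourhoods of $\tau_1$ and $\tau_2$, and to show separately that in each part $\bigcup_{i\in\N}C_i$ eventually contains every lattice point. Note first that $O\notin C$ (automatic, since $C\subset\R^2_{\geq}$ has non-empty interior), so the extremal rays $\tau_1,\tau_2$ of $L_{\Q_{\geq}}(C\cap\R^2_{\geq})$ are the two tangent rays from $O$ to $C$ and each meets $C$ in a single point; as observed earlier this gives $\int(\FraC)=\FraC\setminus(\tau_1\cup\tau_2)$. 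Since $\FraC$ has only finitely many lattice points at distance at most $d$ from $O$ for each $d$, the assertion is equivalent to: $\FraC\setminus(\tau_1\cup\tau_2)$ contains only finitely many points outside $\FraS$; I will produce $d$ explicitly en route.

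For the bulk, fix a closed subcone $K$ of $L_{\Q_{\geq}}(C)$ bounded away from $\tau_1$ and $\tau_2$. For every ray $\tau\subseteq K$, $\tau\cap C$ is a segment $\overline{P_\tau Q_\tau}$ with $0<\d(P_\tau)<\d(Q_\tau)$ and $\tau\cap C_i=i\,\overline{P_\tau Q_\tau}$; the consecutive segments $i\,\overline{P_\tau Q_\tau}$ and $(i+1)\overline{P_\tau Q_\tau}$ overlap once $i\ge \d(P_\tau)/(\d(Q_\tau)-\d(P_\tau))$, and from there on $\bigcup_{j\ge i}(\tau\cap C_j)$ is a whole half-line of $\tau$. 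The ratio $\d(Q_\tau)/\d(P_\tau)$ is bounded away from $1$ uniformly over $K$, so this threshold is uniform; hence $\bigcup_iC_i$ contains all of $K$ beyond some explicit distance, in particular all but finitely many of its lattice points.

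What remains is to cover the lattice points close to, say, $\tau_2$ (the ray $\tau_1$ is symmetric by the Remark following Lemma \ref{limite_distancia}). Here Lemma \ref{limite_distancia} is the decisive input: with $P_i$ the point of $C_i\cap C_{i+1}$ closest to $\tau_2$, one has $\d(P_i,\tau_2)\to0$. Combining this with the fact that each $C_i$ is tangent to $\tau_2$ at $iT_2$, where $\{T_2\}=C\cap\tau_2$, and with a direct computation for the union of two consecutive disks, I would extract a computable non-increasing function $\eta$ with $\eta(\rho)\to0$ such that every point of the open cone lying at distance $\rho$ from $O$ and at distance more than $\eta(\rho)$ from $\tau_2$ already belongs to $\bigcup_iC_i$. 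One then uses the arithmetic of $\tau_2$: when $\tau_2$ has rational slope, a lattice point not on $\tau_2$ stays at distance at least a fixed positive constant from $\tau_2$, so once $\rho$ is large enough that $\eta(\rho)$ drops below that constant, every lattice point $P$ of the open cone with $\d(P)>\rho$ has distance to $\tau_2$ equal to $0$ --- whence $P\in\tau_2$, contradicting $P\in\int(\FraC)$ --- or greater than $\eta(\d(P))$, so $P\in\FraS$. Taking $d$ to be the largest of the bulk threshold and the thresholds attached to $\tau_1$ and $\tau_2$ finishes the proof, and every quantity appearing in it is effectively computable.

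I expect the hard part to be the second half of the last paragraph. The region that $\bigcup_iC_i$ fails to fill near $\tau_2$ is not a tidy strip but a chain of curvilinear slivers resting on $\tau_2$ between the successive tangency points $iT_2$; one must control its width --- precisely what Lemma \ref{limite_distancia} supplies --- and then exclude a lattice point of $\int(\FraC)$ sitting inside such a sliver for arbitrarily large $\d(P)$, which is exactly the point where the rationality (or the Diophantine quality) of the slope of $\tau_2$ must enter.
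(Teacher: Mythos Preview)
Your approach is essentially the paper's, with the same key ingredient (Lemma \ref{limite_distancia}) and the same hidden hypothesis; the packaging is slightly different.

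The paper does not split into bulk plus boundary. Instead it goes straight to your boundary argument, but organises it via Lemma \ref{lema_rectangulo}: take one rectangle on each extremal ray whose base is the segment between two consecutive lattice points on that ray and whose height $d'$ is small enough that the rectangle contains no lattice points off its base; by periodicity the translated rectangles tile a full strip of width $d'$ along each ray with no lattice points in it. Then choose $i_0$ with $h_{i_0}<d'$, so that for $i\ge i_0$ the curvilinear slivers you describe are contained in these lattice-free strips; beyond $C_{i_0}$ everything in $\int(\FraC)$ therefore lies in some $C_i$. Your function $\eta(\rho)$ and the constant gap between lattice points and a rational-slope line are just the analytic rephrasing of this rectangle-plus-periodicity construction; in particular your separate bulk argument is correct but superfluous, since the two strip arguments already cover the whole interior of the cone.

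On the rationality issue you flag: you are right that the argument needs it, and the paper needs it too---taking ``two consecutive points of the semigroup in $\tau_1$'' presupposes that $\tau_1$ contains lattice points, hence has rational slope. Under the stated hypothesis $C\subset\R^2_{\ge}$ this is \emph{not} automatic (a disk with generic centre and radius has irrational tangent slopes), so the lemma as stated is used only in contexts (Theorem \ref{teorema_circulos_fg}) where the tangency points, and hence the slopes, are rational. Your remark that ``the rationality (or the Diophantine quality) of the slope of $\tau_2$ must enter'' is exactly the crux; without it the slivers and the nearest lattice points both shrink like $1/i$ and the conclusion can fail.

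One small correction: ``$O\notin C$ because $C$ has non-empty interior'' is not the reason. The reason is that any closed disk containing $O$ and sitting in $\R^2_{\ge}$ would have $O$ on its boundary, and a disk meeting the corner of the quadrant at a boundary point necessarily spills into negative coordinates. The conclusion you draw (each $\tau_j$ meets $C$ in a single point) is correct.
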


\begin{proof}
Consider two rectangles in  $\FraC$ whose bases are segments determined by two consecutive points of the semigroup in $\tau_1$ for the firsts rectangle and in $\tau_2$ for the second  and with height (the same for both) a sufficiently small value to obtain no points of $\N^2$ into them (excepting in their bases). Denote by $d'$ this height.

For the sake of simplicity we consider that $\tau_2$ is the line $\{y=0\}.$ In this case the rectangles are as in Figure \ref{rectangulos}.
              \begin{figure}[h]
            \begin{center}
\includegraphics[scale=0.35]{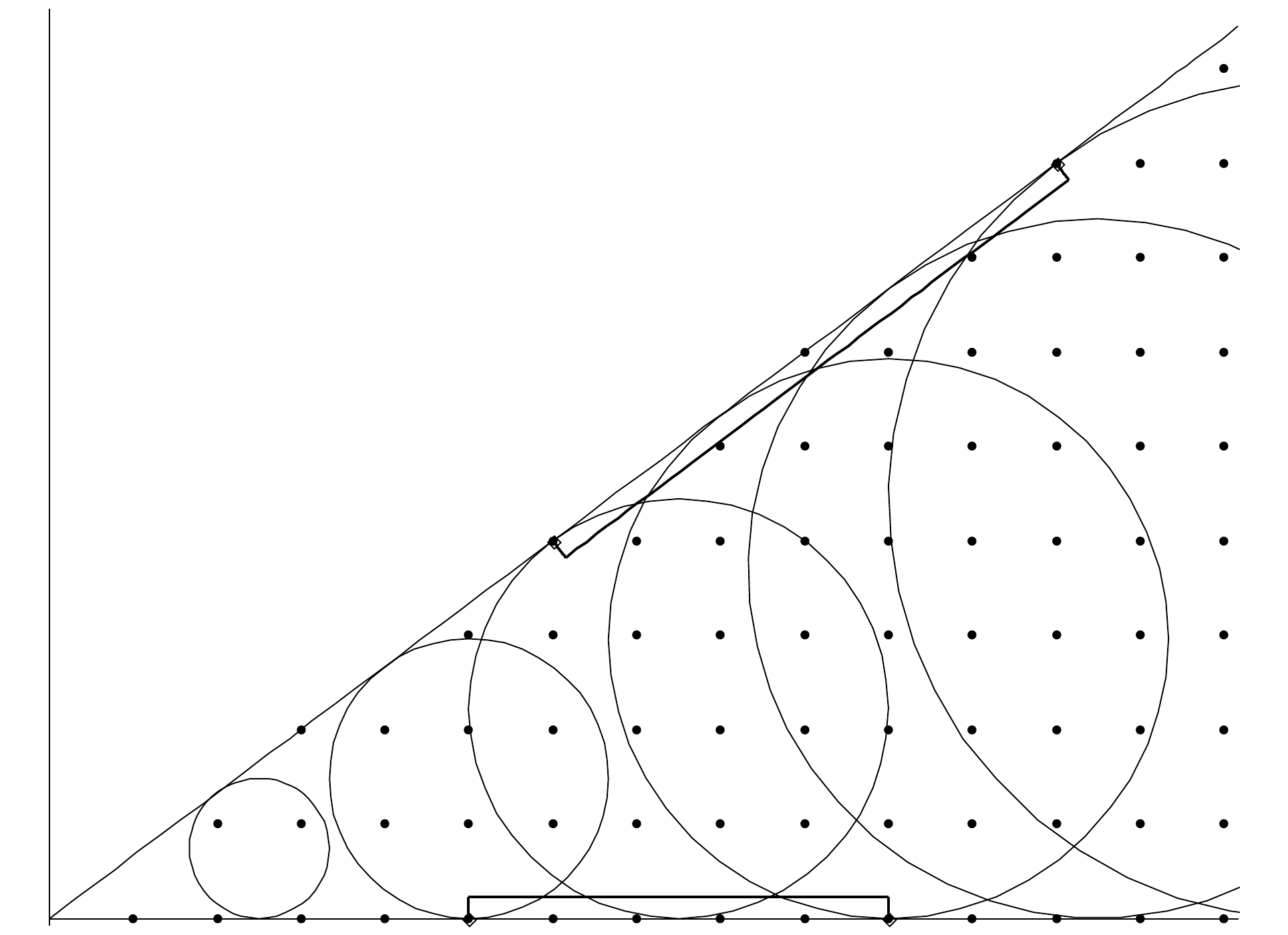}
            \caption{Construction 1.}\label{rectangulos}
            \end{center}
          \end{figure}

Denote by  $T_1,T_2\in \FraS$ the vertices\footnote{Note the point $T_1$ is a natural multiple of the point $\tau_2\cap C$ and that $T_2=2T_1.$} of the base of the rectangle over the line $\tau_2.$

Consider now the region of the cone obtained applying to the above rectangle all the translations defined by the vector  $\overrightarrow{OT_1}$ and all its positive multiples. This construction is done over $\tau_1$ and over  $\tau_2$ (see Figure \ref{franjas}). In this region there are not integer points (Lemma  \ref{lema_rectangulo}).
        \begin{figure}[h]
            \begin{center}
\includegraphics[scale=0.39]{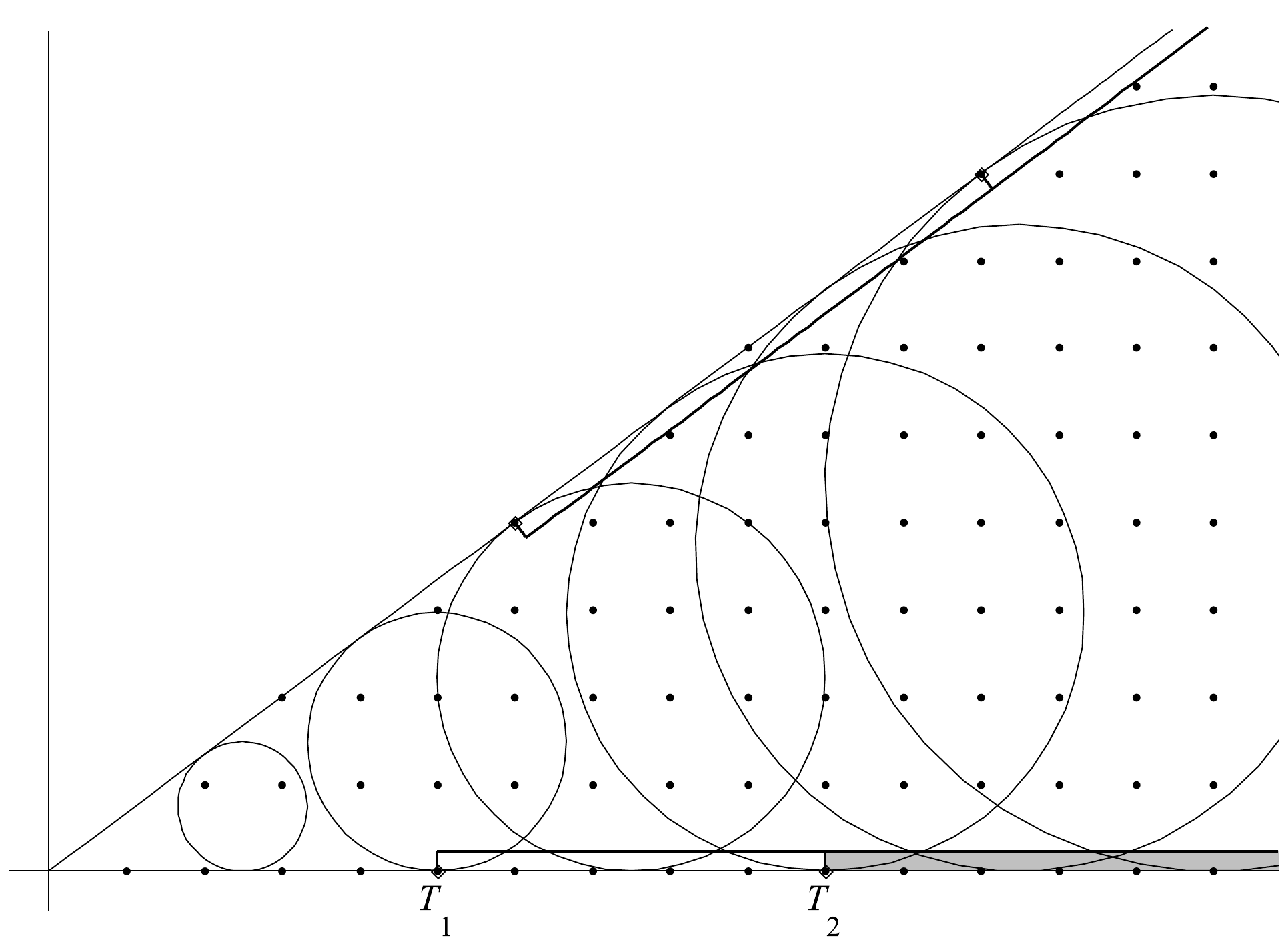}
            \caption{Construction 2.}\label{franjas}
            \end{center}
        \end{figure}

Let $i_0\in \N$ the first term of the sequence of heights  $\{h_i\}_i$ (defined in (\ref{distancia_hi})) such that $h_{i_0}<d'.$ Lemma \ref{limite_distancia} asserts the existence of  $i_0.$

Then there exists  $d \in \R_{\geq}$ determined by the circle  $C_{i_0}$ such that  $\{P\in \int(\FraC) | \d(P)>d\}\subset \bigcup_{i\ge i_0}^{\infty} C_i \cap \N^2 \subset \FraS.$ In Figure  \ref{interior_cono}, observe  that $i_0=6$.
        \begin{figure}[h]
            \begin{center}
\includegraphics[scale=0.4]{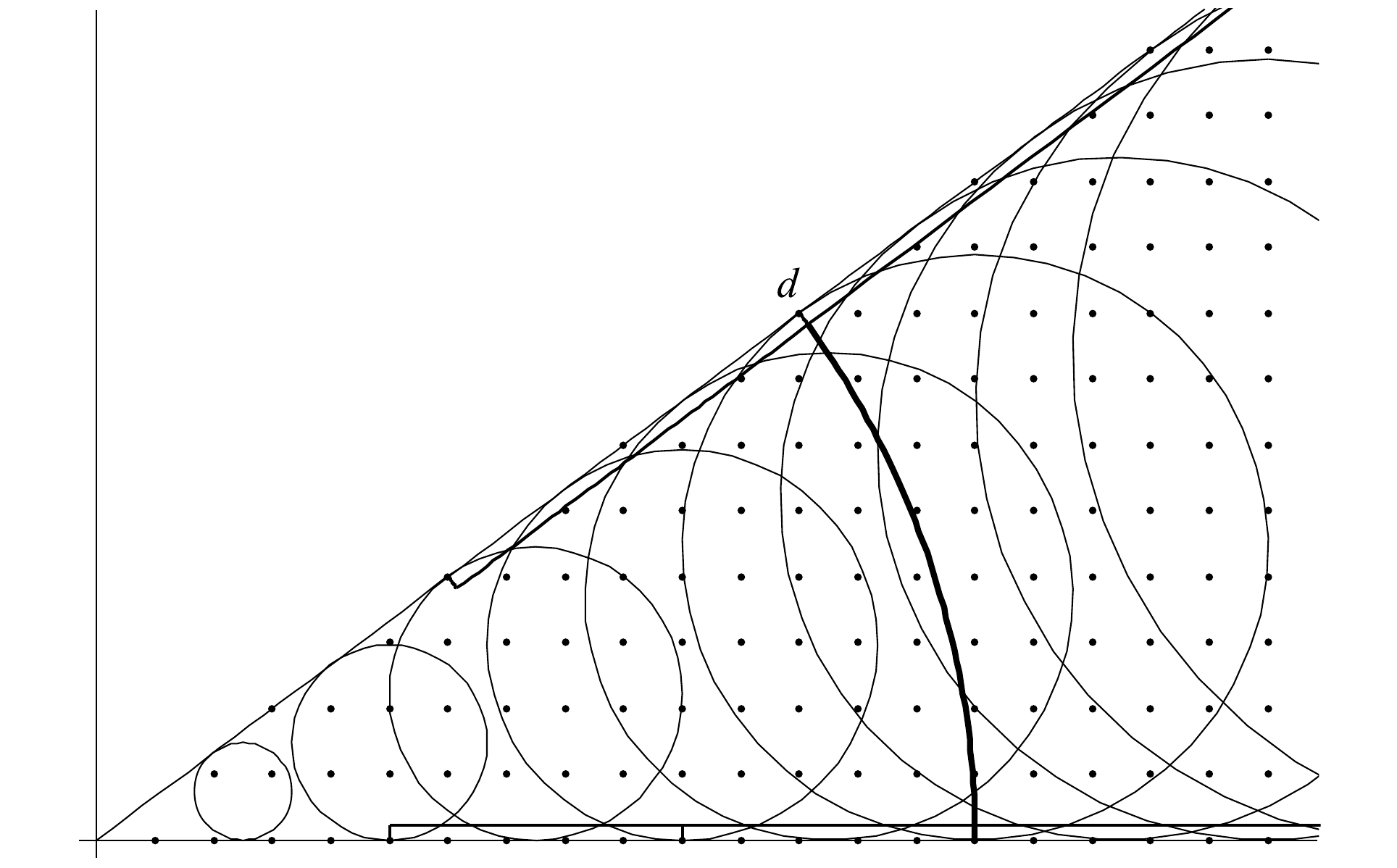}
            \caption{Construction 3.}\label{interior_cono}
            \end{center}
        \end{figure}
\end{proof}

The region delimited by $\tau_1$, $\tau_2$ and the circle with center the origin and radius  $d$ of the above lemma  (Figure \ref{interior_cono}) can be replaced by the triangle delimited by the $\tau_1$, $\tau_2$ and the line joining the points of the intersection of such lines with the circle $C_{i_0}$. This simplifies the computation of the integer points of the region.

The following Theorem characterizes  affine circle semigroups and provides an algorithm to compute their minimal system of generators.

\begin{theorem}\label{teorema_circulos_fg}
The semigroup $\FraS$ is finitely generated if and only if $C\cap \tau_1$ and $C\cap \tau_2$ have rational points. Furthermore, in such case the minimal system of generators of  $\FraS$ can be computed algorithmically.
\end{theorem}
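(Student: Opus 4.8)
The plan is to mirror, for circles, the structure of the proof of Theorem \ref{teorema_poligonos_fg} for polygons, using the geometric lemmas established in Section \ref{s5} and the ``surgery'' lemmas of Section \ref{s3}. The statement has two directions. For the necessity direction, suppose (say) $C\cap\tau_1\subseteq\R^2_{\geq}\setminus\Q^2$, so that $\tau_1$ contains no nonzero lattice point and hence $\FraS\cap\tau_1=\emptyset$. If $G=\{s_1,\dots,s_r\}$ were a finite system of generators of $\FraS$, pick $s_k\in G$ whose direction vector $\overrightarrow{Os_k}$ has maximal slope. Since $\FraS\cap\tau_1=\emptyset$, the open cone between $\tau_1$ and the ray through $s_k$ is a nonempty two–dimensional cone, so it contains a rational point $Q$; for a suitable $u\in\N$ the point $uQ$ lies in some dilate $C_{i_0}$ (because the dilates $C_i$ eventually fill any fixed ray inside the cone $L_{\Q_\geq}(C\cap\R^2_\geq)$), hence $uQ\in\FraS$, yet $uQ$ cannot be written over $G$ since every generator has slope $\le$ slope of $s_k <$ slope of $Q$. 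This contradiction — verbatim the argument in Theorem \ref{teorema_poligonos_fg} — shows $\FraS$ is not finitely generated; the case $C\cap\tau_2$ irrational is symmetric.

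For the sufficiency direction, assume $C\cap\tau_1$ and $C\cap\tau_2$ both contain rational points. The key point is that for a circle, the intersection with an extremal ray is \emph{always a single point} (a circle is strictly convex), so ``$C\cap\tau_i$ contains a rational point'' means $C\cap\tau_i$ \emph{is} a rational point, call it $R_i$, and then $\tau_i$ is a rational ray carrying lattice points. First I would invoke Lemma \ref{lema_interior_cono} (after translating so that $C\subset\R^2_\geq$, using that finitely many dilates can be added in to push the picture into the positive quadrant — or by handling the general position directly via the symmetry remark after Lemma \ref{limite_distancia}): there is a computable $d\in\R_\geq$ with $\{P\in\int(\FraC)\mid \d(P)>d\}\subset\FraS$. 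Consequently $\int(\FraC)\setminus\int(\FraS)$ is a \emph{finite} set $T$, obtained by listing the lattice points of the (computable) bounded region cut off by $\tau_1$, $\tau_2$ and the circle $C_{i_0}$ and discarding those already in $\FraS$ (membership in $\FraS$ being decidable by Lemma \ref{pertenencia_circulo}).

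The algorithm then runs exactly as in the proof of Proposition \ref{corte_segmentos}. Since $R_1,R_2$ are rational points, the slopes of $\tau_1,\tau_2$ are rational, so $\FraC=L_{\Q_\geq}(C\cap\R^2_\geq)\cap\N^2$ is a full affine cone and its minimal system of generators is computable. The semigroups $\FraS\cap\tau_1$ and $\FraS\cap\tau_2$ are finitely generated submonoids of rank–one lattice monoids, with computable generators. Using Lemma \ref{sg_S_prima2} twice (once for each extremal ray) I build a finitely generated semigroup $\FraF'$ with $\FraF'\cap\tau_i=\FraS\cap\tau_i$ for $i=1,2$ and $\FraF'\setminus\{\tau_1,\tau_2\}=\FraC\setminus\{\tau_1,\tau_2\}\supseteq\int(\FraS)$. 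Then $\FraS=\FraF'\setminus T$, and since each element of the finite set $T$ is removed one at a time, Lemma \ref{quitando elementos} and Corollary \ref{semigrupo menos un numero finito de puntos} guarantee that $\FraS$ is finitely generated and produce a system of generators, from which the minimal one is extracted by deleting decomposable elements.

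The main obstacle is verifying the two facts that make the finite ``defect'' set $T$ well defined and computable: (i) that only finitely many interior lattice points of the cone fail to lie in $\FraS$ — this is precisely Lemma \ref{lema_interior_cono}, whose proof rests on the limit computation $h_i\to 0$ of Lemma \ref{limite_distancia}, i.e. the fact that consecutive dilated circles $C_i,C_{i+1}$ overlap more and more and their overlap region creeps down to the extremal ray; and (ii) that one can reduce to the case $C\subset\R^2_\geq$ without changing finite generation — here one notes that throwing away the finitely many dilates $C_i$ that are not contained in the positive quadrant only removes finitely many lattice points, so the argument above applies to the cofinite part and Corollary \ref{semigrupo menos un numero finito de puntos} stitches the rest back in. Everything else is bookkeeping with the already–proved surgery lemmas.
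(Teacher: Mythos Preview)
Your overall architecture matches the paper: necessity is the argument from Theorem \ref{teorema_poligonos_fg}, and sufficiency proceeds via Lemma \ref{lema_interior_cono} (finite interior defect), Lemma \ref{sg_S_prima2} (adjust the two rays), and Corollary \ref{semigrupo menos un numero finito de puntos} (excise the defect). But there is a concrete geometric error. You claim that strict convexity forces $C\cap\tau_i$ to be a single point. This is false here: $\tau_1,\tau_2$ are the extremal rays of $L_{\Q_\geq}(C\cap\R^2_\geq)$, not of $L_{\Q_\geq}(C)$. When the disk is not contained in $\R^2_\geq$ (for instance $b<r$), the lower extremal ray is forced to be the axis $\{y=0\}$, and $C\cap\tau_2$ is then a \emph{chord} of the disk --- a segment with typically irrational endpoints $a\pm\sqrt{r^2-b^2}$. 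The paper treats this explicitly, branching into four cases according to whether each $C\cap\tau_i$ is a point or a segment; in the segment cases it invokes Lemma \ref{generadores_rayos} to compute the non-cyclic semigroup $\FraS\cap\tau_i=\big(\bigcup_j j\,\overline{PQ}\big)\cap\N^2$, which your single-point treatment would get wrong.

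Your patch (ii) for $C\not\subset\R^2_\geq$ fails for the same reason: if $b<r$ then $ib-ir<0$ for \emph{every} $i\ge 1$, so no dilate $C_i$ lies in $\R^2_\geq$ and there are no ``finitely many'' bad dilates to discard. The correct route is the paper's case split: on a coordinate-axis extremal ray the dilated chords already cover a cofinite set of lattice points (so Lemma \ref{generadores_rayos} handles that side), while Lemma \ref{lema_interior_cono} is only needed near a genuine tangent ray. Once you add the segment cases and invoke Lemma \ref{generadores_rayos} there, your argument coincides with the paper's.
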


\begin{proof}
If $\FraS$ is finitely generated proceed as in Theorem \ref{teorema_poligonos_fg}.

For the reciprocal we consider several cases. If $C\cap \R^2_{\geq}=\emptyset,$ then $\FraS=\{0\}$ and therefore it is finitely generated. In other case, compare the semigroups $\FraS$ and $\FraC$. The relationship between the sets $\int(\FraC)$ and $\int(\FraS)$ is the following:
if $P\in \int(\FraC)\setminus \int(\FraS)$ then $\d(P)\le d,$ where $d$ is the distance determined by Lemma \ref{lema_interior_cono}. Therefore $\int(\FraC)\setminus \int(\FraS)$ is finite. In addition, given $P\in \N^2$ with $\d(P)>d,$ $P\in \int(\FraC)$ if and only if $P\in \int(\FraS).$

To study the relationship between $\FraC\cap \tau_1$ and $\FraS\cap \tau_1,$ and  $\FraC\cap \tau_2$ and $\FraS\cap \tau_2,$ we must consider four cases:
\begin{enumerate}
\item Assume that $C\cap \tau_1$ and $C\cap \tau_2$ have only one point (this situation is similar to that shown in Figure \ref{interior_cono}).
In this case, if $\FraC\cap \tau_1=\langle g_1 \rangle$ and $\FraC\cap \tau_2=\langle g_2 \rangle,$ then all the elements of $\FraS\cap \tau_1$ and $\FraS\cap \tau_2$ are natural multiples of $g_1$ or $g_2.$

\item Assume that $C\cap \tau_1$ is a point and $C\cap \tau_2$ is a segment. In this case $\tau _2$ is the line $\{y=0\}$ (see Figure \ref{pendiente_negativa}).
        \begin{figure}[h]
            \begin{center}
\includegraphics[scale=0.4]{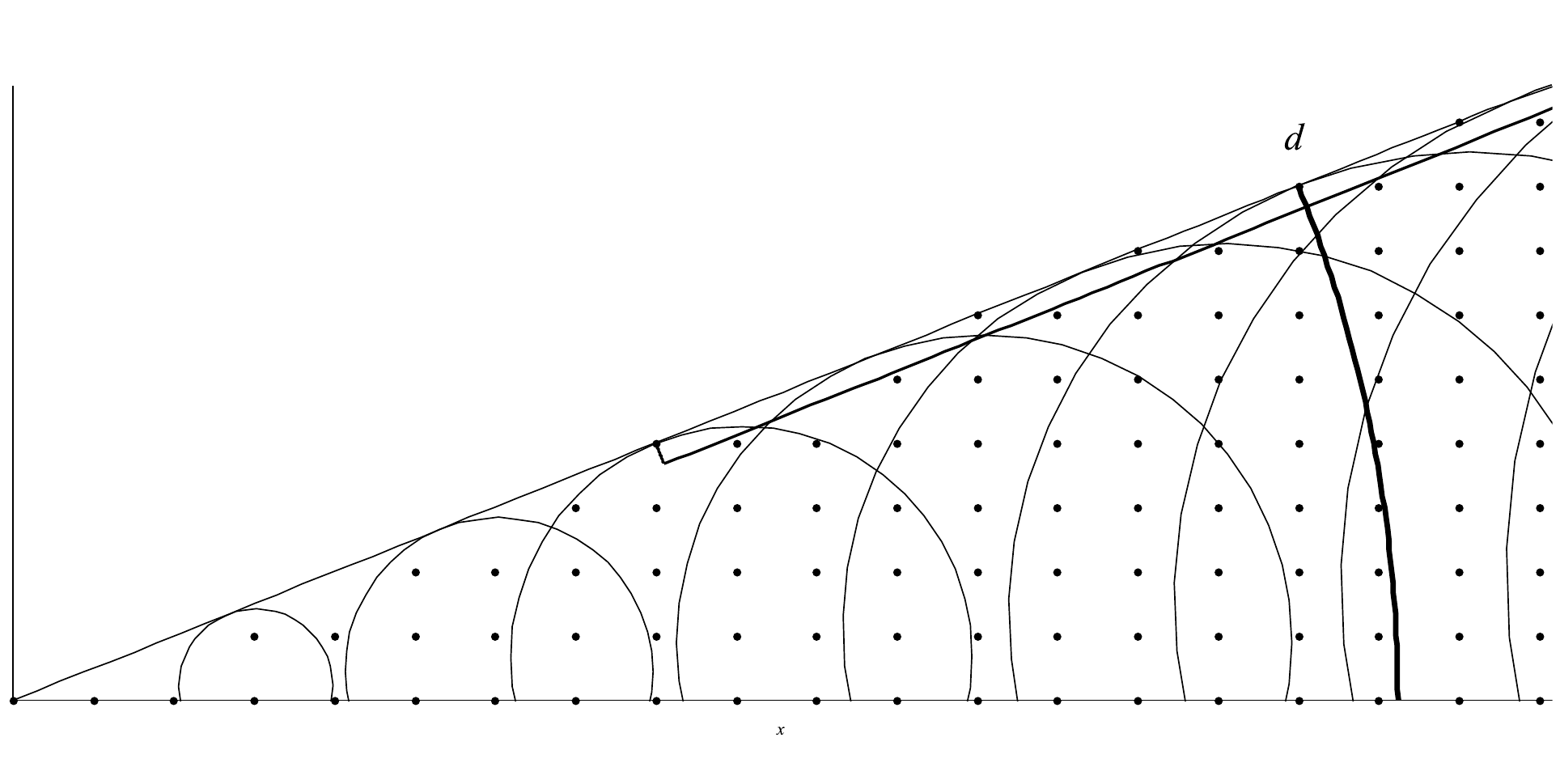}
            \caption{$C\cap \tau_2$ is a segment.}\label{pendiente_negativa}
            \end{center}
        \end{figure}
We compare again the semigroups $\FraS$ and $\FraC$:
\begin{itemize}
\item Note that if $\FraC\cap \tau_1=\langle g_1 \rangle$ then all the elements of $\FraS\cap \tau_1$ are natural multiples of $g_1.$
\item The set $(\FraC\cap \tau_2)\setminus (\FraS\cap \tau_2)$ is finite. Besides,  $\FraS\cap \tau_2$ is a finitely generated semigroup and its minimal system of generators can be computed algorithmically (see Lemma \ref{generadores_rayos}).
\end{itemize}

\item Assume that $C\cap \tau_1$ is a segment and $C\cap \tau_2$ is a point. This case is similar to the above case.

\item Assume that $C\cap \tau_1$ and $C\cap \tau_2$ are  segments. In this case $\tau _1$ is the line $\{x=0\}$ and $\tau _2$ is the line $\{y=0\}.$
Then the sets $(\FraC\cap \tau_1)\setminus (\FraS\cap \tau_1)$ and $(\FraC\cap \tau_2)\setminus (\FraS\cap \tau_2)$ are finite. Besides,  $\FraS\cap \tau_1$ and $\FraS\cap \tau_2$  are two finitely generated semigroups and their minimal systems of generators can be computed algorithmically (see Lemma \ref{generadores_rayos}).
\end{enumerate}

We have obtained that in any case $\FraS$ is the set obtained after eliminate from  $\FraC$ a finite number of points of its interior and some points of its extremal rays.

See now how a system of generators of $\FraS$ can be built. We construct explicitly a set of generators of the semigroup $\FraS'$ such that $\FraS'\cap \tau_1=\FraS\cap \tau_1,$ $\FraS'\cap \tau_2=\FraS\cap \tau_2,$ and $\int(\FraS')=\int(\FraC).$ This set will be used in Corollary \ref{bound}.

Denote by  $\{g_1, \ldots, g_p\}$ the minimal system of generators of  $\FraC$ where  $g_1\in  \tau_1$ and $g_2\in \tau_2.$
If we consider the first case and assume that $s_1$ and $s_2$ are the minimal elements of $\FraS$ in $\tau_1$ and $\tau_2,$ then there exist $k_1,k_2\in \N$ such that $s_1=k_1g_1$ and $s_2=k_2g_2.$ By using Lemma \ref{sg_S_prima2} on $s_1$ and after on $s_2,$ the semigroup  $\FraS'$ is generated by
\begin{equation}\label{gen_rayos1}
\{s_1,s_2,g_3,\ldots ,g_p\}\cup  \left(\bigcup _{i=2}^p
\{g_i+g_1, \ldots, g_i+(k_1-1)g_1\}\right) \cup \{s_1+g_2, \ldots, s_1+(k_2-1)g_2\} \cup$$
$$\cup
\bigcup_{j=1}^{k_2-1} \left(\bigcup _{i=2}^p \{g_i+g_1+jg_2, \ldots, g_i+(k_1-1)g_1+jg_2\}\right)
 \cup
\bigcup _{i=3}^p \{g_i+g_2, \ldots, g_i+(k_2-1)g_2\}.
\end{equation}

Consider now the second case (analogously for the third case). There exists $k_1\in \N$ such that  $s_1=k_1g_1\in \tau_1,$ and there exist $\lambda _1,\ldots , \lambda _t\in \N$ such that  $\lambda _1< \cdots < \lambda _t$ and $\FraS\cap \tau_2$ is generated minimally by  $\{(\lambda _i,0)=\lambda_i (1,0)| i=1\ldots ,t\}$ ($g_2=(1,0)$). By using Lemma \ref{sg_S_prima2}, one obtain a system of generators of the semigroup  $\FraS'$,
\begin{equation}\label{gen_rayos2}\{s_1,g_3,\ldots ,g_p\}\cup \left(\bigcup_{i=1}^t \{ \lambda _i g_2 \}\right) \cup \{ s_1+ g_2,\ldots ,s_1+(\lambda_t-1)g_2 \}  \cup  $$
$$
\cup \left(\bigcup _{i=3}^p \{g_i+g_2, \ldots, g_i+(\lambda_t-1)g_2\}\right)
\cup \left(\bigcup _{i=2}^p \{g_i+g_1, \ldots, g_i+(k_1-1)g_1\}\right)
$$
$$
\cup
\bigcup_{j=1}^{\lambda_t-1} \left(\bigcup _{i=2}^p \{g_i+g_1+jg_2, \ldots, g_i+(k_1-1)g_1+jg_2\}\right)
\end{equation}

For the fourth case, there exist $\lambda _1,\ldots , \lambda _t,\lambda' _1,\ldots , \lambda' _{t'}\in \N$ such that $\lambda _1< \cdots < \lambda _t$ and $\lambda' _1< \cdots < \lambda' _{t'},$ $\FraS\cap \tau_1$ is generated minimally by  $\{(0,\lambda' _i)=\lambda'_i (0,1)| i=1\ldots ,t'\}$ ($g_1=(0,1)$) and $\FraS\cap \tau_2$ is generated minimally by  $\{(\lambda _i,0)=\lambda_i (1,0)| i=1\ldots ,t\}$ ($g_2=(1,0)$). Then $\FraS'$ is generated by
\begin{equation}\label{gen_rayos3}
\left(\bigcup _{i=1}^{t'} \{\lambda' _ig_1\}\right)\cup \left(\bigcup_{i=1}^t \{ \lambda _i g_2 \}\right) \cup \{g_3,\ldots ,g_p\} \cup \left(\bigcup_{i=1}^{t'}\{ \lambda' _i g_1+ g_2,\ldots ,\lambda' _ig_1+(\lambda_t-1)g_2 \}\right)  \cup  $$
$$
\cup \left(\bigcup _{i=3}^p \{g_i+g_2, \ldots, g_i+(\lambda_t-1)g_2\}\right)
\cup
\bigcup_{j=1}^{\lambda_t-1} \left(\bigcup _{i=2}^p \{g_i+g_1+jg_2, \ldots, g_i+(\lambda '_{t'}-1)g_1+jg_2\}\right).
\end{equation}

In any case, $\FraS'\cap \tau_1=\FraS\cap \tau_1,$ $\FraS'\cap \tau_2=\FraS\cap \tau_2,$ and $\int(\FraS')=\int(\FraC).$ Besides, $\FraS \subseteq \FraS'$ and $\FraS'\setminus \FraS$ is finite (if $P\in \FraS'\setminus \FraS,$ then $\d(P)\le d$).

Therefore, by Corollary \ref{semigrupo menos un numero finito de puntos}, $\FraS=\FraS'\setminus\left( \FraS'\setminus \FraS\right)$ is finitely generated. Moreover, a system of generators of $\FraS$ can be computed from a system of generators of $\FraS'.$ The idea of the algorithm is to eliminate from the minimal system of generators of $\FraS'$ the finite set of element $\FraS'\setminus \FraS$ by using the algorithm shown in Corollary \ref{semigrupo menos un numero finito de puntos}. At the end of this process  the minimal system of generators of $\FraS$ is obtained.
\end{proof}

The following Lemma allows to check if an element belongs to the semigroup  $\FraS$ by using its distance to the origin.

\begin{lemma}\label{pertenencia_circulo}
Let $(x,y)\in \N^2.$ The element $(x,y)\in \FraS$ if and only if $(x,y) \in C_k\cup C_{k+1}$ with  $k= \left\lfloor \sqrt{\displaystyle{\frac{x^2+y^2}{a^2+b^2}}} \right\rfloor\in \N.$
\end{lemma}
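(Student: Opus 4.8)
The idea is to exploit the monotone way the circles $C_i$ sweep out the cone: as $i$ grows, the annular ``width'' of $C_i$ along any ray through the origin grows linearly, and the distance of a point $(x,y)$ to the origin determines, up to an error of at most one, which index $i$ is relevant. First I would record the elementary fact that $(x,y)\in C_i$ means $\d((x,y),(ia,ib))\le ir$, i.e. the point $(x,y)$ lies in the closed disk of radius $ir$ about $(ia,ib)$. Writing $\rho=\d((x,y))=\sqrt{x^2+y^2}$ and noting that the farthest point of $C_i$ from $O$ is at distance $i\sqrt{a^2+b^2}+ir$ and the nearest at distance $i\sqrt{a^2+b^2}-ir$, membership $(x,y)\in C_i$ forces
$$
i\left(\sqrt{a^2+b^2}-r\right)\le \rho \le i\left(\sqrt{a^2+b^2}+r\right).
$$
This already pins down $i$ to lie in an interval whose endpoints are ratios of $\rho$ to fixed constants; the sharper claim that $i$ is exactly $k$ or $k+1$ with $k=\lfloor \rho/\sqrt{a^2+b^2}\rfloor$ comes from a finer geometric observation, which I take up next.

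The key step is to show that consecutive disks $C_i$ and $C_{i+1}$ overlap ``enough'' along every ray meeting the cone, so that a point in $\FraS$ that is in some $C_j$ with $j\ge k+2$ or $j\le k-1$ would be impossible once we know $k=\lfloor\rho/\sqrt{a^2+b^2}\rfloor$. Concretely, I would argue on the ray through $(x,y)$: the chord $C_i\cap \tau$ is the portion of $\tau$ between distances that I can compute explicitly by intersecting $\tau$ with the circle of radius $ir$ about $(ia,ib)$; since the center $(ia,ib)$ scales linearly with $i$ and so does the radius, the set $\{i : (x,y)\in C_i\}$ — equivalently $\{i : \rho\in [\text{inner}(i),\text{outer}(i)]\}$ along that ray — is an interval of integers, and one checks its length is at most $2$ by comparing $\text{inner}(i+1)$ with $\text{outer}(i-1)$ and showing the former exceeds the latter (this is where the union $\bigcup C_i$ being ``connected along rays'' for large indices, as used implicitly in Lemma \ref{limite_distancia} and Lemma \ref{lema_interior_cono}, is the real content). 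Hence if $(x,y)\in\FraS$ there are at most two consecutive indices $i$ with $(x,y)\in C_i$, and pinning which two reduces to estimating $\rho$ against $i\sqrt{a^2+b^2}$: since $C_i$ is the disk of radius $ir<i\sqrt{a^2+b^2}$ centered at distance $i\sqrt{a^2+b^2}$ from $O$, the quantity $\rho/\sqrt{a^2+b^2}$ differs from $i$ by less than $1$ whenever $(x,y)\in C_i$, forcing $i\in\{\lfloor\rho/\sqrt{a^2+b^2}\rfloor,\lceil\rho/\sqrt{a^2+b^2}\rceil\}=\{k,k+1\}$ (treating separately the degenerate case $\rho/\sqrt{a^2+b^2}\in\N$).

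From there the two implications are immediate. If $(x,y)\in C_k\cup C_{k+1}$ then certainly $(x,y)\in\FraS$ by definition of $\FraS$. Conversely if $(x,y)\in\FraS$, then $(x,y)\in C_j$ for some $j\in\N$, and by the estimate $|j-\rho/\sqrt{a^2+b^2}|<1$ derived above we get $j\in\{k,k+1\}$; the fact that $k$ is a natural number is clear since $\rho\ge 0$. I would close by remarking that this gives the claimed membership test, since $k$ is computed by a single floor of a rational expression and then one checks the two explicit disk inequalities $\d((x,y),(ka,kb))\le kr$ and $\d((x,y),((k+1)a,(k+1)b))\le (k+1)r$.

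The main obstacle is the quantitative overlap step: one must verify carefully that the index set $\{i:(x,y)\in C_i\}$ cannot skip a value and cannot contain three consecutive values, i.e. that $i\left(\sqrt{a^2+b^2}+r\right)\ge (i+1)\left(\sqrt{a^2+b^2}-r\right)$ fails to hold for small $i$ but the relevant two-disk overlap still captures every $(x,y)$ at distance $\rho$ — more precisely that membership of $(x,y)$ in $C_j$ together with $j\ge k+2$ is contradictory. This amounts to checking $r<\sqrt{a^2+b^2}$ always holds when $C\subset\R^2_{\ge}$ meets the interior of the positive cone nontrivially (so the origin is outside $C_1$, hence outside every $C_i$), and then a short inequality manipulation; the degenerate boundary cases where $C$ is tangent to an axis, or where $\rho/\sqrt{a^2+b^2}$ is exactly an integer, need to be handled by hand but are routine.
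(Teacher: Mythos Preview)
Your argument hinges on the claim that $(x,y)\in C_i$ forces $\bigl|i-\rho/\sqrt{a^2+b^2}\bigr|<1$, which you derive from the triangle inequality $\bigl|\rho-i\sqrt{a^2+b^2}\bigr|\le ir$ together with $r<\sqrt{a^2+b^2}$. But the triangle inequality gives only $\bigl|i-\rho/\sqrt{a^2+b^2}\bigr|\le ir/\sqrt{a^2+b^2}$, and this right-hand side exceeds $1$ as soon as $i>\sqrt{a^2+b^2}/r$; the step therefore fails for all sufficiently large $i$, not merely in the ``degenerate boundary cases'' you defer to the end. Concretely, take $C$ to be the disk with center $(1,1)$ and radius $1$ (so $C\subset\R^2_{\ge}$) and the point $(5,0)\in\N^2$. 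Here $\rho=5$, $k=\lfloor 5/\sqrt2\rfloor=3$, yet $C_j\cap\{y=0\}=\{(j,0)\}$ for every $j$, so $(5,0)\in\FraS$ lies in $C_5$ only and belongs to neither $C_3$ nor $C_4$. Your parallel assertion that the index set $\{i:(x,y)\in C_i\}$ has at most two elements is equally false: for the same circle and $(x,y)=(5,1)$ one computes this set to be $\{3,4,5,6,7,8,9\}$.

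For comparison, the paper's own proof is two lines: it records the inequality $k\,\d((a,b))\le\d((x,y))\le(k+1)\,\d((a,b))$, which is nothing more than the definition of the floor, and then simply asserts that $(x,y)\in C_k\cup C_{k+1}$. It does not supply the missing implication either, and the example $(5,0)$ above shows that the statement, taken literally, is false. So the obstacle you flag in your final paragraph is not a routine verification but a genuine obstruction: neither your approach nor the paper's can go through without an additional hypothesis (for instance restricting to points off a tangent extremal ray, or imposing a stronger relation between $r$ and $\d((a,b))$).
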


\begin{proof}
Given $(x,y)\in \FraS,$ the following inequalities holds
$$kd((a,b))\leq d((x,y))\leq (k+1)d((a,b)), $$ where $k=\left\lfloor \sqrt{\displaystyle{\frac{x^2+y^2}{a^2+b^2}}} \right\rfloor.$ Then $(x,y)$ belongs to $C_k$ and/or to $C_{k+1}.$
\end{proof}

Thus, to detect if an element is in  $\FraS$, it is enough to compare its distance to the origin  with the distance to the center of $C$.
After that, it only remains to check if the point belongs to two circles of  $\FraS$.

In the following result, Proposition \ref{ecuacion} is used to  obtain several inequalities satisfied by the elements of  $\FraS$.
\begin{corollary}\label{ecuacion_circulos}
Every $X=(x,y)\in \FraS\setminus\{\tau_1,\tau_2\}$ satisfies
$$\frac{1}{2}\left(\frac{(a,b)\cdot (x,y)}{{\sqrt{(\d(X) r)^2 -[(b,-a)\cdot (x,y)]^2}}}+1\right)
    \d(X)
    \mod \frac{\d(X) \left(\d((a,b))^2-r^2\right)}{{2\sqrt{(\d(X) r)^2 -[(b,-a)\cdot (x,y)]^2}}}
    \le \d(X).$$
\end{corollary}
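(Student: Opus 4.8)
The plan is to apply Proposition \ref{ecuacion} to the ray $\tau$ through $X=(x,y)$ and then make the abstract constants $a,b$ of that proposition completely explicit, using the geometry of the circle $C$. By Corollary \ref{necesaria_y_suficiente_ecuacion} (and the proof of Theorem 8 in \cite{Rosales_modular}), once $\tau\cap C$ is a segment $\overline{PQ}$ with $\d(P)<\d(Q)$, the inequality satisfied along $\tau$ is
\[
\frac{\d(Q)}{\d(Q)-\d(P)}\,\d(X)\ \mathrm{mod}\ \frac{\d(P)\d(Q)}{\d(Q)-\d(P)}\ \le\ \d(X),
\]
so the whole task reduces to computing $\d(P)$ and $\d(Q)$, the two distances from the origin to the intersection points of the line $\tau$ with the circle $C$ (center $(a,b)$, radius $r$), and then simplifying the two coefficients
\[
\frac{\d(Q)}{\d(Q)-\d(P)}=\frac12\!\left(\frac{\d(P)+\d(Q)}{\d(Q)-\d(P)}+1\right),\qquad
\frac{\d(P)\d(Q)}{\d(Q)-\d(P)}
\]
in terms of $\d(P)+\d(Q)$ and $\d(P)\d(Q)$ only. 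This is the natural move because those two symmetric functions are exactly the coefficients of the quadratic in $t$ whose roots are $\d(P),\d(Q)$.

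First I would parametrize $\tau$ by $t\mapsto t\,U$ where $U=(x,y)/\d(X)$ is the unit vector along $\tau$, and substitute into $\lvert tU-(a,b)\rvert^2=r^2$. This gives the quadratic $t^2-2\big((a,b)\cdot U\big)t+\big(\d((a,b))^2-r^2\big)=0$, hence by Vieta
\[
\d(P)+\d(Q)=2\,(a,b)\cdot U=\frac{2\,(a,b)\cdot(x,y)}{\d(X)},\qquad
\d(P)\d(Q)=\d((a,b))^2-r^2 .
\]
Next I would compute $\d(Q)-\d(P)=\sqrt{(\d(P)+\d(Q))^2-4\d(P)\d(Q)}$. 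The discriminant is $4\big((a,b)\cdot U\big)^2-4(\d((a,b))^2-r^2)=4\big(r^2-\lvert(a,b)\rvert^2+((a,b)\cdot U)^2\big)=4\big(r^2-\lvert (a,b)-((a,b)\cdot U)U\rvert^2\big)$, i.e. $r^2$ minus the squared distance from the center to the line $\tau$; since $(b,-a)$ is a unit-length rotation of $(a,b)$ orthogonal to it, $\lvert (a,b)-((a,b)\cdot U)U\rvert=\lvert (b,-a)\cdot U\rvert=\lvert(b,-a)\cdot(x,y)\rvert/\d(X)$. Therefore
\[
\d(Q)-\d(P)=\frac{2}{\d(X)}\sqrt{(\d(X)r)^2-\big[(b,-a)\cdot(x,y)\big]^2},
\]
which is exactly the radical appearing in the statement. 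Substituting these three expressions into the two coefficients above and clearing the common factor $\d(X)$ yields precisely the claimed modular inequality; the $+1$ inside the first factor is just the algebraic identity $\d(Q)/(\d(Q)-\d(P))=\tfrac12\big((\d(P)+\d(Q))/(\d(Q)-\d(P))+1\big)$.

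The routine-but-slightly-delicate point — and the only real obstacle — is the bookkeeping of which intersection point is $P$ and which is $Q$ (so that $\d(Q)-\d(P)>0$ and the radical is taken with the right sign), together with checking that the relevant ray genuinely meets $C$ in a full segment rather than a single tangent point; the hypothesis $X\in\FraS\setminus\{\tau_1,\tau_2\}$ is what guarantees we are in the interior, so $\tau$ is not an extremal ray and $\tau\cap C$ has nonempty interior, i.e. $\d(P)<\d(Q)$ strictly and the discriminant is positive. Once that is in place, the remainder is the algebra sketched above, and the statement follows directly from Proposition \ref{ecuacion} applied along $\tau$.
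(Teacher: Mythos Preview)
Your proposal is correct and follows the same strategy as the paper: both reduce the claim to Corollary~\ref{necesaria_y_suficiente_ecuacion} and then compute $\d(P),\d(Q)$ for the two intersection points of the ray through $X$ with $C$, after which the stated coefficients drop out. The only difference is computational packaging: the paper solves the system for $P,Q$ explicitly, writes out $\d(P),\d(Q)$, and simplifies, whereas you parametrize by arclength and read off $\d(P)+\d(Q)$ and $\d(P)\d(Q)$ directly via Vieta, which is slightly cleaner but not a different argument. One small wording slip: $(b,-a)$ is not unit length (it has norm $\d((a,b))$); what you actually use, and what is correct, is that the perpendicular distance from $(a,b)$ to the line $\mathbb{R}(x,y)$ equals $|(b,-a)\cdot(x,y)|/\d(X)$.
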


\begin{proof}
Repeating the reasonings of Proposition  \ref{ecuacion} and Corollary \ref{necesaria_y_suficiente_ecuacion}, the coefficients of the inequality (\ref{ecuacion_inicial}) are determined by the points of the intersection of $C$ and the ray given by $X.$

In this case, the points are
$$P =\left(\frac{ {x} \left( {a}  {x} + {b}  {y}-\sqrt{- \left( {b}^2  {x}^2-2  {a}  {b}  {x}  {y}+ {a}^2  {y}^2-\left( {x}^2+ {y}^2\right) r^2\right)}\right)}{{x}^2+ {y}^2}\right.,$$
$$\left. \frac{ {a}  {x}  {y}+ {b}  {y}^2-\sqrt{- {y}^2 \left( {b}^2  {x}^2-2  {a}  {b}  {x}  {y}+ {a}^2  {y}^2-\left( {x}^2+ {y}^2\right) r^2\right)}}{ {x}^2+ {y}^2}\right),$$
$$Q=\left(\frac{ {x} \left( {a}  {x} + {b}  {y}+\sqrt{-\left( {b}^2  {x}^2-2  {a}  {b}  {x}  {y}+ {a}^2  {y}^2-\left( {x}^2+ {y}^2\right) r^2\right)}\right)}{{x}^2+ {y}^2}\right.,$$
$$\left. \frac{ {a}  {x}  {y}+ {b}  {y}^2+\sqrt{- {y}^2 \left( {b}^2  {x}^2-2  {a}  {b}  {x}  {y}+ {a}^2  {y}^2-\left( {x}^2+ {y}^2\right) r^2\right)}}{ {x}^2+ {y}^2}\right),$$
and
$$\d(P)= \frac{ {a}  {x} + {b}  {y}-\sqrt{-( {b}  {x}- {a}  {y})^2+\left( {x}^2+ {y}^2\right) r^2}}
{\sqrt{ {x}^2+ {y}^2}},$$
$$\d(Q)= \frac{ {a}  {x} + {b}  {y}+\sqrt{ -( {b}  {x}- {a}  {y})^2+\left( {x}^2+ {y}^2\right) r^2}}
{ \sqrt{ {x}^2+ {y}^2}}.$$

By Corollary  \ref{necesaria_y_suficiente_ecuacion}, $\d(X)$ verifies the inequality  $$\frac{\d(Q)}{\d(Q) - \d(P)} \d(X) \mod \frac{\d(Q)  \d(P)}{\d(Q) - \d(P)} \le \d(X),$$ where $$\frac{\d(Q)  }{\d(Q) - \d(P)}= \frac{1}{2}\left(\frac{(a,b)\cdot (x,y)}{{\sqrt{(\d(X) r)^2 -[(b,-a)\cdot (x,y)]^2}}}+1\right)$$ and $$\frac{\d(Q)  \d(P)}{\d(Q) - \d(P)}= \frac{\d(X) \left(\d((a,b))^2-r^2\right)}{{2\sqrt{(\d(X) r)^2 -[(b,-a)\cdot (x,y)]^2}}}.$$
\end{proof}

If the intersection of an extremal ray $\tau$ with the initial circle is a segment, the above result is also fulfilled by all points of $\FraS \cap \tau$. When the above mentioned intersection is only one point, the inequality we get is the inequality that appears in the proof of Proposition  \ref{ecuacion}.

\begin{example}
Consider the circle $C$ with center $(7/3,4/3)$ and radius $1/3.$ We are going to apply the algorithm shown in Theorem \ref{teorema_circulos_fg} to the semigroup $\FraS$ generated by $C.$

We compute the integer cone $\FraC$ delimited by the extremal rays  of  $L_{\Q_{\geq}}(C)$. This cone is minimally generated by $$\Big\{(4,3),(12,5),(2,1),(3,2),(7,3)\Big\}.$$
With the notation of Theorem \ref{teorema_circulos_fg}, $g_1=(4,3)$, $g_2=(12,5),$ $s_1=(32,24)=8g_1$ and $s_2=(96,40)=8g_2.$

Applying the construction of the system of generators of $\FraS'$ of (\ref{gen_rayos1}), the semigroup $\FraS'$ is minimally generated by
$$\Big\{(2,1),(3,2),(7,3),(7,5),(11,8),(15,11),(19,14),(23,17),(27,20),(31,23),$$
$$(32,24),(96,40),(19,8),(31,13),(43,18),(55,23),(67,28),(79,33),(91,38)\Big\}.$$
This semigroup is equal to  $\FraS$ in their extreme rays and equal to $\FraC$ in their interiors.

The finite set $\FraS'\setminus \FraS$ has 13 points.
By using Corollary \ref{semigrupo menos un numero finito de puntos},
we eliminate recurrently from $\FraS'$ the points of $\FraS'\setminus \FraS$ obtaining the minimal system of generators of  $\FraS$ (see Figure \ref{7_3-4_3-1_3_generators}): $$\Big\{(5,3),(6,4),(7,3),(7,4),(7,5),(8,4),(9,5),(9,6),(10,5),(11,6),(11,8),(13,6),$$
$$(15,11),(18,8),(19,14),(23,10),(23,17),(27,20),(31,23),(32,24),(33,14),(35,26),$$
$$(38,16),(50,21),(55,23),(67,28),(79,33),(91,38),(96,40),(115,48),(127,53),(139,58)\Big\}.$$
        \begin{figure}[h]
            \begin{center}
\includegraphics[scale=0.25]{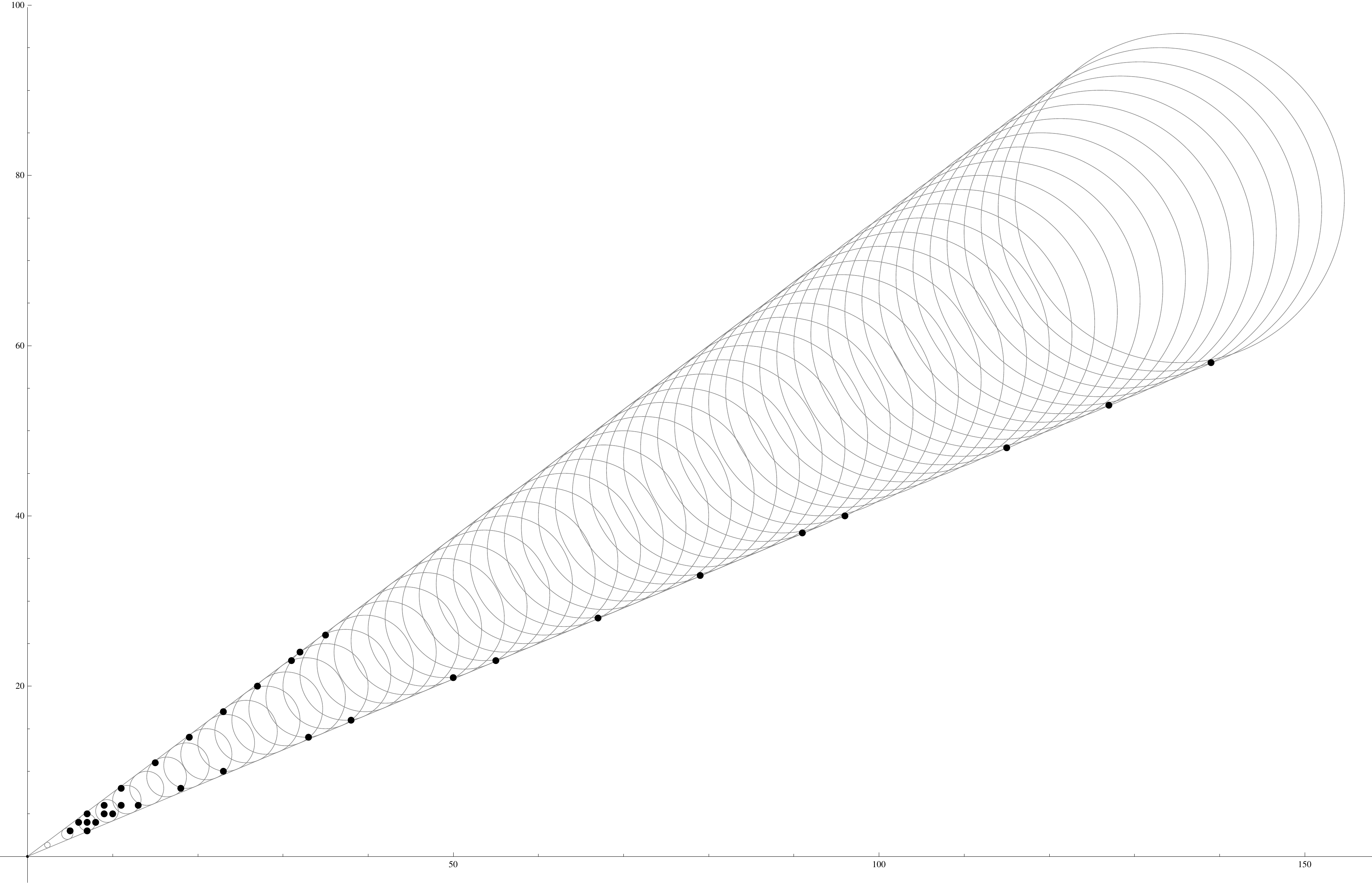}
            \caption{The minimal generators set of the semigroup generated by the circle with center $(7/3,4/3)$ and radius $1/3.$}\label{7_3-4_3-1_3_generators}
            \end{center}
        \end{figure}

This example has been computed by using our program {\tt CircleSG} available in \cite{programa} (this programm requires {\tt Wolfram Mathematica 7} to run).
\end{example}

\section{Bounding the minimal system of generators}\label{s6}
Assume that $\FraS$ is an affine semigroup obtained from a circle and consider the norm $||(x_1,\ldots, x_n)||_1=\sum _{i=1}^n |x_i|.$ Denote by $M$ the maximum of the norms of the elements of the minimal system of generators of the cone $\FraC$. One can find several bounds for this value (see \cite{Pottier91} and \cite{Sturmfels91}).

Following the notation given in the proof of Theorem \ref{teorema_circulos_fg}, denote by $l$ the cardinality of the finite set $\int(\FraS')\setminus \int(\FraS).$ Furthermore, the minimal elements of $\FraS$ in $\tau_1$ and $\tau_2$ are integer multiples of $g_1$ or $g_2$. Denote by $k$ the maximum of such integers.

\begin{corollary}\label{bound}
Every element $s$ of the minimal system of generators of $\FraS$ fulfills that $$|| s ||_1 \le 3^l (2k-1) M .$$
\end{corollary}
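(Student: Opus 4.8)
The plan is to track the size of generators through the explicit construction given in the proof of Theorem \ref{teorema_circulos_fg}. Recall that there we produced a semigroup $\FraS'$ with $\FraS'\cap\tau_1=\FraS\cap\tau_1$, $\FraS'\cap\tau_2=\FraS\cap\tau_2$, $\int(\FraS')=\int(\FraC)$, and then obtained $\FraS$ from $\FraS'$ by deleting the finite set $\FraS'\setminus\FraS$, which lies entirely in $\int(\FraS')$ and has cardinality $l$. So the argument has two stages: first bound the norms of the minimal generators of $\FraS'$ by $(2k-1)M$, and then show that each deletion step of Lemma \ref{quitando elementos} multiplies the bound by at most $3$, so that after $l$ deletions (Corollary \ref{semigrupo menos un numero finito de puntos}) the bound becomes $3^l(2k-1)M$.

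For the first stage I would inspect the generating sets (\ref{gen_rayos1}), (\ref{gen_rayos2}), (\ref{gen_rayos3}) for $\FraS'$. Every listed generator has one of the forms $\lambda g_1$, $\lambda g_2$, $g_i$, or $g_i+\alpha g_1+\beta g_2$ with $g_i$ a minimal generator of $\FraC$ (so $\|g_i\|_1\le M$) and with the coefficients $\alpha,\beta$ ranging only up to $k-1$ (the $\lambda_t$, $\lambda'_{t'}$, $k_1$, $k_2$ appearing there are all $\le k$ by the definition of $k$ as the maximum of the relevant multipliers, and $g_1,g_2$ themselves have $\|\cdot\|_1\le M$). Hence each such generator has $\|\cdot\|_1\le M+(k-1)M+(k-1)M=(2k-1)M$, using that $g_1,g_2$ are themselves among the $g_i$. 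Taking the minimal system of generators of $\FraS'$ only shrinks the list, so every minimal generator of $\FraS'$ has norm at most $(2k-1)M$.

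For the second stage, observe that when we delete a single minimal generator $a$ from a semigroup $\FraF$ with minimal system $\{a,f_2,\dots,f_t\}$, Lemma \ref{quitando elementos} gives the new generating set $\{f_2,\dots,f_t,f_2+a,\dots,f_t+a,2a,3a\}$; every element of it has $\|\cdot\|_1$ at most $3$ times the previous bound on $\|a\|_1,\|f_i\|_1$ (the worst case being $3a$). Passing to the minimal system of generators again only removes elements, so the norm bound is multiplied by at most $3$ per step. Since $\FraS'\setminus\FraS$ has $l$ elements and Corollary \ref{semigrupo menos un numero finito de puntos} removes them one at a time, after $l$ steps every minimal generator of $\FraS=\FraS'\setminus(\FraS'\setminus\FraS)$ has norm at most $3^l(2k-1)M$, as claimed.

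The main obstacle I anticipate is the bookkeeping in the first stage: one must check that in \emph{all} of (\ref{gen_rayos1})--(\ref{gen_rayos3}) the coefficients multiplying $g_1$ and $g_2$ never exceed $k-1$ (so the total never exceeds $(2k-1)M$), and in particular that $k$ as defined — the maximum of the integers expressing the minimal elements of $\FraS$ on $\tau_1,\tau_2$ as multiples of $g_1,g_2$ — indeed dominates $\lambda_t$, $\lambda'_{t'}$, $k_1$, $k_2$. This is true because those $\lambda$'s index minimal generators of $\FraS\cap\tau_i$ viewed as multiples of $g_i$, and $k$ is exactly the largest such index; but it requires care to state cleanly. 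Everything else is a routine estimate with $\|\cdot\|_1$ and the triangle inequality.
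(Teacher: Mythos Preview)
Your proposal is correct and follows essentially the same two-stage approach as the paper: first bound the generators of $\FraS'$ by $(2k-1)M$ by inspecting the explicit lists (\ref{gen_rayos1})--(\ref{gen_rayos3}), then observe that each application of Lemma~\ref{quitando elementos} in the deletion procedure of Corollary~\ref{semigrupo menos un numero finito de puntos} at most triples the bound, giving $3^l(2k-1)M$ after $l$ steps. Your reading of $k$ as the maximum of all the multipliers $k_1,k_2,\lambda_t,\lambda'_{t'}$ is the intended one and is exactly what makes the first-stage estimate go through.
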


\begin{proof}
The minimal system of generators of  $\FraS'$ can be obtained from (\ref{gen_rayos1}), (\ref{gen_rayos2}) or (\ref{gen_rayos3}). Thus, the norm of their elements can bounded by the value
$$(2k-1)M=\max\{kM,M,(k-1)M +M ,kM+(k-1)M,(k-1)M+(k-1)M+M\},$$ where every value $\{kM,M,(k-1)M +M ,kM+(k-1)M,(k-1)M+(k-1)M+M\}$ is a bound for the elements of the subsets obtained in  (\ref{gen_rayos1}), (\ref{gen_rayos2}) and (\ref{gen_rayos3}).

To obtain a system of generators of $\FraS,$ we apply sequentially to the elements of  $\int(\FraS')\setminus \int(\FraS)$ the algorithm described in Corollary \ref{semigrupo menos un numero finito de puntos}. For the first iteration one has the bound is the maximum of $\{(2k-1)M,2(2k-1)M,3(2k-1)M\}$.

Since the above method is applied as many times as elements has the set $\int(\FraS')\setminus \int(\FraS)$, a bound for the elements of the minimal system of generators of $\FraS$ is $3^l(2k-1)M.$
\end{proof}

\begin{remark}
Analogously, a bound for the minimal generators of a convex polygonal semigroup can be obtained.
\end{remark}

\end{document}